\documentclass[11pt,a4paper]{amsart}

\usepackage{latexsym}

\usepackage[dvips]{graphics}
\usepackage{amssymb}
\usepackage{amsthm}
\usepackage{amsmath}
\usepackage{amstext}
\usepackage{amscd}
\usepackage{mathrsfs}
\usepackage{hyperref}
\usepackage{color}
\usepackage{enumitem}

\usepackage{ifmtarg}
\makeatletter
  \newcommand{\TODO}[1]{\@ifmtarg{#1}{\emph{\textcolor{red}{\textbf{TODO}}}~}{\textcolor{red}{ \emph{\textbf{TODO:}~#1~}}}}
  \newcommand{\FORUS}[1]{\@ifmtarg{#1}{\emph{\textcolor{blue}
  {\textbf{TODO}}}~}{\textcolor{blue}{ \emph{\textbf{FOR US:}~#1~}}}}
\makeatother

\numberwithin{equation}{section}

\theoremstyle{plain}
\newtheorem{thm}{Theorem}[section]
\newtheorem{lem}[thm]{Lemma}

\newtheorem{theorem*}{Theorem}[]

\newtheorem{lemma}[thm]{Lemma}
\newtheorem{prop}[thm]{Proposition}
\newtheorem{cor}[thm]{Corollary}

\theoremstyle{definition}
\newtheorem{defi}[thm]{Definition}

\newtheorem{rmk}[thm]{Remark}

\theoremstyle{definition}
\newtheorem{defn}[thm]{Definition}

\theoremstyle{remark}
\newtheorem{rem}[thm]{Remark}

\newtheorem{question}{Question}

\newcommand{\C}{\mathbb{C}}
\newcommand{\D}{\mathbb{D}}
\newcommand{\N}{\mathbb{N}}

\newcommand{\Z}{\mathbb{Z}}

\newcommand{\h}{{h}}

\newcommand{\kk}{{k}}

\newcommand{\DD}{\Delta}

\newcommand{\inv}{^{-1}}

\DeclareMathOperator{\id}{id}

\newcommand{\impli}{\varphi}
\newcommand{\derimpli}{\psi}

\newcommand{\contour}{\mathcal C}
\newcommand{\polar}{\mathcal P}

\newcommand{\point}{P}

\DeclareMathOperator{\sing}{Sing \, }

\DeclareMathOperator{\snu}{mult}
\DeclareMathOperator{\idiscr}{ind}
\DeclareMathOperator{\mult}{mult}

\newcommand{\my}{\mathfrak m}

\usepackage{enumitem}
\setlist[itemize]{labelindent=.6em, itemindent=1em, leftmargin=!, label=\textbullet}
\usepackage{ifmtarg}

\newcommand{\snt}{$\nu$-transverse}
\newcommand{\unt}{$\nu$-transverse }
\newcommand{\multseq}{multiplicity sequence }

\newcommand{\PW}{\mathcal {PW}}

\newcommand{\dimtype}{\operatorname {d.t. }}

\title{Zariski equisingularity of surface singularities in $\C^3$ by a local invariant}

\author{Adam Parusi\'nski and Lauren\c tiu P\u aunescu}
\address {Universit\'e C\^ote d'Azur,  CNRS,  LJAD, UMR 7351, 06108 Nice, France}
\email{adam.parusinski@univ-cotedazur.fr}

\address{School of Mathematics and Statistics, The University of Sydney,
  Sydney, NSW, 2006, Australia }%
\email{laurentiu.paunescu@sydney.edu.au}%

\keywords {
Zariski equisingularity, surface singularities, Teissier's numbers
. }
\subjclass[2010]{
32Sxx,	
14B05. 
32S25, 
}

\begin{document}
\begin{abstract}
We associate to every analytic surface singularity $(V,0)$ in $(\C^3,0)$, not necessarily isolated, an invariant 
$\snu^* (V)$ and show that an analytic family of such singularities $(V_t,0)$, $t\in (\C^l,0)$, is generically Zariski equisingular if and only if $\snu^* (V_t)$ is constant.  The invariant, that we call the \multseq of $V$,  takes into account the multiplicities of
the successive discriminants of $V$ by generic corank one projections.
\end{abstract}
\maketitle


In a seminal Astérisque paper \cite{teissierasterisque73} Bernard Teissier defined for every isolated complex analytic hypersurface singularity $(X_0,0) \subset (\C^{n+1},0)$,  $X_0=f\inv (0)$, a sequence of positive integers 
$$\mu^*(X_0) = (\mu^{n+1}(X_0),\mu^{n}(X_0), \ldots, \mu^1(X_0),\mu^0(X_0)),$$
now called Teissier's numbers. 
Geometrically, $\mu^{i}(X_0)$ is the Milnor number of the section of $X_0$ by a generic linear subspace of dimension $i$, $\mu^1(X_0) = \mult_0(X_0)-1$ and $\mu^0(X_0))=1$.   
Teissier showed in \cite{teissierasterisque73} that if $\mu^*(X_t)$ 
is constant in a family $X \subset \C^{n+1} \times \D $ of isolated singularities parameterised
by $\D=\{t\in \C;  |t|<1\}$, then the pair $(X\setminus (\{0\}\times \D ), \{0\}\times \D )$ satisfies Whitney's Conditions. The converse implication was showed by Briançon and Speder in  \cite{brianconspeder76}.

In \cite[Definition 3] {zariski71open} O. Zariski proposed the notion of  algebro-geometric equisingularity of a hypersurface $X$ along a smooth subspace $S$, now called Zariski equisingularity. In \cite{Zariski1979} Zariski developed this general theory of equisingularity for algebroid and  algebraic hypersurfaces over an algebraically closed field of characteristic zero $\kk$.  This equisingularity notion is defined recursively by taking  the discriminants of some successive co-rank 1 projections. 
Zariski equisingularity of families of isolated surface singularities $(V_t,0)\subset (\C^3,0)$, $t\in T=(\C^l,0)$, was studied by Briançon and Speder  in 
\cite{brianconspederthese} and by Briançon and Henry  in \cite{brianconhenry80}. It was showed in \cite[Théorème 3.5.3]{brianconspederthese} that such a family is Zariski equisingular for a generic linear projection $(x,y,z) \to (x,y)$ if and only if the following numbers are constant
\begin{enumerate}[label=(\roman*)]
  \item 
Teissier's numbers $
\mu^*(V_t) = (\mu^3(V_t),\mu^2(V_t),\mu^1(V_t),\mu^0(V_t)). 
$
\item 
The number of ordinary double points $k(V_t)$ (double-plis évanescents) and the number of cusps 
$\varphi (V_t)$ (fronces évanescentes) of a generic linear projection of the generic fibre of a miniversal deformation of $V_t$.  
\end{enumerate}

It is showed in \cite[Proposition 2. 10]{teissierasterisque73} that $\mu^*(V_t)$ are analytic invariants, that is Teissier's numbers do not depend of a choice of a local system of coordinates.  For $k(V_t)$ and $\varphi (V_t)$ the invariance by an arbitrary local change of coordinates is showed in \cite{brianconhenry80}. Then it is showed in \cite[Théorème 4.1]{brianconhenry80} that a family of isolated surface singularities $(V_t,0)\subset (\C^3,0)$ is generically Zariski equisingular if and only if the numbers $\mu^*(V_t), k(V_t), \varphi (V_t)$ are constant.  In the proof Briançon and Henry give algebraic formulas for $ k(V_t)$ and $ \varphi (V_t)$.  One of their crucial arguments is  \cite[Lemme 3.7]{brianconhenry80}, see also \cite[Lemme clé 1.2.2]{teissierRabida82} for a more general result.  This lemma says  that for a surface $S$ with an isolated singularity and the critical locus $C=C_{\pi}$ of a generic linear projection $\pi$ restricted to $S$, $\pi(C)$ is  equisingular to the generic linear  projection of $C$ ($\pi$ is not generic with respect to $C$ because $C$ was defined using $\pi$).  This lemma, and its proof, also plays a crucial role in the construction of a parametrisation of polar wedges of  Neumann and Pichon  \cite{BNP2014}, \cite{NPpreprint}, and of \cite{PP24}.  

\begin{rmk}
By Speder \cite{speder75}, in any dimension, Zariski equisingularity (for generic projections) implies Whitney's Condition. But in general, except for the case of families of plane curve singularities, it gives stronger equisingularity than Whitney's Condition
 as showed by an example of Briançon-Speder \cite{brianconspeder75b}.  
\end{rmk}

For a non-isolated singularity $(V,0)\subset (\C^3,0)$, neither Milnor's number $\mu^3(V)$, nor $k(V)$ and $\varphi (V)$ can be defined.
  We replace them in Section \ref{sec:localinvariant} by a new invariant $\snu^* (V, 0)$ that we call the \multseq of $V$.  This invariant is defined by the multiplicities of discriminant loci of successive corank 1 generic projections of $V$. As we show in Proposition \ref{prop:multindep} the \multseq  is an analytic invariant and it is upper semicontinuous in families by Proposition \ref{prop:multsemicont}.  For an isolated singularity $V$ it can be expressed in terms of $\mu^*(V), k(V), \varphi (V)$, Proposition \ref{prop:multseqforisolated}.

The following main result of this note is proven in Section \ref{sec:proofofmaintheorem}. 

\begin{thm}\label{thm:mainequivalence2}
Consider the germ of an analytic family of surface singularities in $\C^3$
\begin{align}\label{eq:famwithparameter}
f(x,y,z, t)= f_t(x,y,z) \in \C \{x,y,z,t\},
\end{align} 
where $t\in (\C^l,0)$ is considered as a parameter. 
 Then the following conditions are equivalent:
\begin{enumerate}[label=(\roman*)]
  \item 
After a generic linear change of coordinates $x,y,z$ the family $V_t = f_t\inv (0)$ is  \unt Zariski equisingular.  
  \item 
After a generic change of coordinates $x,y,z$ the family $V_t = f_t\inv (0)$ is   Zariski equisingular.  
\item
The \multseq $\snu^* (V_t, 0)$ is constant.  
\end{enumerate}
\end{thm}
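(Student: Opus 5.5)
We prove the cycle (i) $\Rightarrow$ (ii) $\Rightarrow$ (iii) $\Rightarrow$ (i). Recall the setup. After a generic linear change of coordinates, the discriminant of $V_t$ under the projection $(x,y,z)\mapsto (x,y)$ is a family of plane curves $\Delta_t = \{\Delta(x,y,t)=0\}$. Zariski equisingularity means two things. First, $\Delta$ is equimultiple along the parameter axis, and the projection $(x,y)\mapsto x$, composed with its own discriminant, is again equisingular, which reduces to $\Delta_t$ being an equisingular family of plane curves. Second, and this is the $\nu$-transversality, the projection is transverse in the sense that the multiplicity of $\Delta_t$ equals the multiplicity of the discriminant of a generic projection. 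The \multseq $\snu^*(V_t)$ records $\mult_0 V_t$, $\mult_0 \Delta_t$, and the multiplicities of the further discriminants obtained from generic projections of $\Delta_t$. Since $\Delta_t$ is a plane curve, these last multiplicities encode its Zariski equisingularity type: equivalently, the multiplicity sequence of its generic polar and discriminant data.

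\textbf{(i) $\Rightarrow$ (ii).} This is immediate, since $\nu$-transverse Zariski equisingularity is by definition a strengthening of Zariski equisingularity, and a generic linear change of coordinates is a particular generic change of coordinates.

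\textbf{(ii) $\Rightarrow$ (iii).} Suppose the family is Zariski equisingular for some generic system of coordinates. Then $\Delta_t$ is an equisingular family of plane curves, so every plane-curve invariant of $\Delta_t$ is constant, including its multiplicity. The delicate point is that the coordinates are generic for the family but possibly not for each $V_t$ separately. I would use upper semicontinuity (Proposition \ref{prop:multsemicont}) together with analytic invariance (Proposition \ref{prop:multindep}): the generic-projection multiplicities at $t=0$ bound those at nearby $t$. Equisingularity of the discriminants in the chosen coordinates then forces equality, because the discriminant multiplicities in the chosen (generic at $t=0$) coordinates are realised for all small $t$. Hence $\snu^*(V_t)$ is constant.

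\textbf{(iii) $\Rightarrow$ (i).} This is the main step. Assume $\snu^*(V_t)$ is constant and choose a linear projection that is generic for $V_0$. By semicontinuity and the Zariski-open nature of genericity, the same projection is generic for $V_t$ with $t$ small, so it computes $\snu^*(V_t)$, and in particular it is $\nu$-transverse. Constancy of $\mult_0 V_t$ and $\mult_0\Delta_t$ gives equimultiplicity at the first two levels. It remains to show that $\Delta_t$ is an equisingular family of plane curves. I would deduce this from constancy of the remaining entries of the sequence, which are the multiplicities of successive discriminants of generic projections of $\Delta_t$. For plane curves these determine the topological type in a family, via the Zariski–Teissier criterion (constant $\mu^*$ for curves) or Zariski's discriminant criterion applied recursively.

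\textbf{Main obstacle.} The projection $\Delta_t\to$ line is not generic with respect to $\Delta_t$, because $\Delta_t$ was constructed from the first projection. This is exactly the issue addressed by the Briançon–Henry lemma (Lemme 3.7), which must be extended to non-isolated singularities. I would try to prove, via a polar-curve parametrisation, that the second-level discriminant multiplicities computed from the projection $\pi$ coincide with those of a truly generic projection of $\Delta_t$. This is what makes the recorded entries of $\snu^*$ sufficient to control equisingularity of $\Delta_t$.
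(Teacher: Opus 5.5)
Your outline has genuine gaps. The decisive step of (iii)$\Rightarrow$(i) is asserted rather than proved, and the obstacle you single out comes from misreading the invariant.

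In $\snu^*(V)=(\mult_0 V,\mult_0 D_f,i_0,\mult_0 D^{i_0}_{D_f})$ the last two entries are $i_0=\idiscr(D_f)$ and the order in $x$ of the generalized discriminant of $D_f$. This discriminant is taken for the coordinate projection $(x,y)\mapsto x$ of the $\nu$-transverse system itself, not for generic projections of $\Delta$. (In general $D_f$ is not reduced, which is why $i_0$ and $D^{i_0}_{D_f}$ appear.) So the non-genericity of $\Delta\to$ line is built into the invariant, and no extension of Brian\c{c}on--Henry's Lemme 3.7 is needed.

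Suppose all four entries, computed for $V_t$ in the coordinates $(x,y-bz,z)$, are constant in $(t,b)$. Then $D^i_{D_f}\equiv 0$ for $i<i_0$ and $D^{i_0}_{D_f}(x,t,b)=x^m\cdot(\text{unit})$, so the second discriminant is equimultiple along the parameter axis. Together with transversality of $(x,y)\mapsto x$, which is preserved by equimultiplicity of $\Delta_{t,b}$, this is exactly Zariski equisingularity of $\Delta_{t,b}$ jointly in $(t,b)$. Note that (i) demands equisingularity in $(b,t)$, not only in $t$ for one projection; your stated target misses this.

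What actually needs proof is the claim you make in passing: that a projection generic for $V_0$ stays generic ($\nu$-transverse) for all small $V_t$ ``by the Zariski-open nature of genericity''. The open set of good projections depends on $t$, so this uniformity is the heart of the matter. The paper gets it from (iii) by upper semicontinuity on the joint parameter space $(t,b)$:
\begin{itemize}
\item each entry computed in $(x,y-bz,z)$ is upper semicontinuous in $(t,b)$;
\item by (iii) it equals its value at $(0,0)$ whenever $b$ is generic for $V_t$;
\item hence it equals that value on a closed analytic set containing a dense set, i.e.\ identically;
\item closedness of $\{D^i_{D_f}\equiv 0\}$ handles the constancy of $i_0$.
\end{itemize}

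Your (ii)$\Rightarrow$(iii) has the same uniformity hole. Semicontinuity gives $\snu^*(V_t)\le\snu^*(V_0)$. The sequence computed in special (fixed) coordinates dominates the generic one, so constancy of the fixed-coordinate values in $t$ again only yields $\snu^*(V_t)\le\snu^*(V_0)$; both inequalities point the same way. Equality requires the fixed coordinates to be $\nu$-transverse for every small $V_t$. That means equisingularity of $\Delta_{b,t}$ in $b$, uniformly in $t$, which you do not establish.

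The paper avoids this route. It proves (ii)$\Rightarrow$(i) by Lemma~\ref{newlem:(4)==>(1)}, where Proposition~\ref{prop:rationalinparameter} (uniformly rational dependence on $b$) upgrades Zariski equisingularity in $t$ for all but finitely many $b$ to Zariski equisingularity in $(t,b)$ jointly. It then gets (i)$\Rightarrow$(iii) from Proposition~\ref{prop:families}: in a $\nu$-transverse Zariski equisingular family the fixed coordinates compute $\snu^*(V_t)$, and all multiplicities in the discriminant tower are constant.
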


The notion of  \unt Zariski equisingular families was introduced in \cite{PP24}.  We recall it in subsections \ref{ssec:newnut} and \ref{ssec:newZEfamiliesdim2}.  

\begin{cor}\label{cor:indep}
If a family $V_t = f_t\inv (0)$ is  Zariski equisingular after a generic linear change of one system of local coordinates $x,y,z$ in $(\C^3,0)$, then for any system of local coordinates in $(\C^3,0)$ it is \unt Zariski equisingular after a generic linear change of coordinates.      
\end{cor}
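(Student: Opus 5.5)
The corollary should follow from Theorem \ref{thm:mainequivalence2} together with the coordinate independence of the \multseq (Proposition \ref{prop:multindep}). Call the given system of local coordinates $(x,y,z)$ and let $(x',y',z')$ be any other system in $(\C^3,0)$.

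First, the hypothesis says that after a generic linear change of $(x,y,z)$ the family $V_t=f_t\inv(0)$ is Zariski equisingular. This is condition (ii) of Theorem \ref{thm:mainequivalence2} for the family written in the coordinates $(x,y,z)$. Applying the implication (ii)$\Rightarrow$(iii), we get that $\snu^*(V_t,0)$ is constant in $t$.

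Next, we pass to the new coordinates. The family $V_t$ is the same family of germs, now defined by $g_t(x',y',z') = f_t(x,y,z)$, where $(x,y,z)\mapsto(x',y',z')$ is a fixed local analytic isomorphism independent of $t$. The function $g(x',y',z',t)$ is again analytic in all variables, so we are in the setting of \eqref{eq:famwithparameter}. By Proposition \ref{prop:multindep}, $\snu^*(V_t,0)$ is an analytic invariant of the germ $(V_t,0)$. Hence the \multseq computed from $g_t$ in the coordinates $(x',y',z')$ equals the one computed from $f_t$, and so it is also constant in $t$.

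Finally, we apply the implication (iii)$\Rightarrow$(i) of Theorem \ref{thm:mainequivalence2} to the family $g_t$ in the coordinates $(x',y',z')$. This shows that after a generic linear change of $(x',y',z')$ the family is \unt Zariski equisingular, which is the claim.

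The only point needing care is the second step. One must check that the invariance in Proposition \ref{prop:multindep} applies fibrewise to each $t$ under a $t$-independent coordinate change, and that the reparametrised $g$ is still an analytic family in the sense of \eqref{eq:famwithparameter}. Both hold because the coordinate change does not involve $t$. Everything else is a direct application of Theorem \ref{thm:mainequivalence2}.
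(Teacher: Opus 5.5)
Your proposal is correct and is essentially the paper's argument. The corollary follows from Theorem~\ref{thm:mainequivalence2}: condition (iii) is intrinsic to the germs $(V_t,0)$ by Proposition~\ref{prop:multindep}, so (ii)$\Rightarrow$(iii) in the original coordinates and (iii)$\Rightarrow$(i) in any other system give the claim, and your check that the coordinate change is $t$-independent, so that invariance applies fibrewise, is the right point to verify.
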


\subsection*{Acknowledgements}
{The first author is grateful for the support and hospitality of the Sydney Mathematical Research Institute (SMRI). 
Both authors  acknowledge support from the Project ‘Singularities and Applications’-CF
132/31.07.2023 funded by the European Union-NextGenerationEU-through Romania’s National
Recovery and Resilience Plan.
}


\section{Zariski equisingularity of surface singularities in $\C^3$}
\label{sec:previouspaper}

In this section we recall the basic notions and results of \cite{PP24} on  Zariski equisingularity of families 
of surface singularities in $\C^3$. In \cite{PP24} these results are stated for algebroid singularities, that is the ones defined by formal power series over $\kk$, as in the paper of Zariski \cite{Zariski1979} .    
For the field of complex numbers they can be stated as well for singularities defined by convergent power series, that is for complex analytic singularities.  
It is clear from the definition that an analytic family of complex analytic singularities is Zariski equisingular if and only if it is Zariski equisingular as a family of algebroid singularities, see also \cite[section 1.3]{PP24}.  
For more on Zariski equisingularity including a historical account see \cite{Lip97} and for a recent survey see \cite{MR4367438}. 
\smallskip

Fix a local system of coordinates $x,y,z$ at the origin in $\C^3$ and let  
$f(x,y,z, t)= f_t(x,y,z) \in \C\{x,y,z,t\}$, $f_t(0)=0$,  
 where $t\in (\C^l,0)$ is considered as a parameter. Such an $f$ induces a family surface singularities $V_t=f_t\inv  (0)$ in $\C^3$.  It is often convenient to assume $f$ reduced, or replace it by its reduced form $f_{red}$ if this is not the case, but it is not necessary. 

We always assume that $f$ is regular in $z$, that is $f_0(0,0,z) \not \equiv 0$.  
Then $f$ is equivalent to a Weierstrass polynomial in $z$ that  it is of the form  
  \begin{align}\label{eq:family}
f_t(x,y,z)= u (x,y,z, t) (z^{d}+ \sum_{i=1}^{d} a_{i} (x,y,t)), \quad u(0)\ne 0 ,   
\end{align} 
with the coefficients satisfying $a_{i}(0)= 0$.  We denote by $D_f (x,y, t)$ the discriminant of $f$, 
more precisely the discriminants of the Weierstrass polynomial associated to $f$, and by $D^i_f (x,y, t) $ the $i$-th generalized discriminant of $f$, see the Appendix.  If $f$ is reduced then we only need the standard discriminant $D_f$.  

In general, let $g\in \C \{x_1, \ldots , x_n\}$ be not necessarily reduced and suppose that 
  $g$ is regular in the variable $x_n$.  We denote by $D_g\in \C \{x_1, \ldots , x_{n-1}\} $ its discriminant and by $D^i_g \in \C \{x_1, \ldots , x_{n-1}\} $ its generalized (or higher order) discriminants.  Then by definition $D_g=D^1_g$.  By  $\idiscr (g) $ we denote the smallest $i$ for which $D^i_g$ is not identically equal to zero.  Then,  up to a universal constant, this first non identically zero $D^i_g$ is equal to the standard discriminant of $g_{red}$.  We refer the reader to the Appendix  for more details on this notion.  
 
\begin{defi}\label{def:newZAinfamilies}
Let $f(x,y,z, t)= f_t(x,y,z) \in \C\{x,y,z,t\}$ be as above.  We say that the family of set germs $V_t = f\inv (0)$ is \emph{Zariski equisingular with respect to the parameter $t$ and the system of coordinates $x,y,z$} if 
\begin{enumerate}
\item 
$D^{j_0}_f (x,y, t) $ is regular in $y$, where $j_0 = \idiscr (f) $,
\item
Either $D^{j_0}_f (0) \ne  0 $ or 
the first non identically equal to zero generalized discriminant of $D^{j_0}_f $ 
is equimultiple along $\{(0,t)\in \C\times \C^l\}$, that is 
$$\mult_{(0,t)} (D_{(D^{j_0}_f)_{red}}) = \mult_{(0,t)} (D^{i_0}_{(D^{j_0}_f)}) ,$$ where $i_0 = \idiscr (D^{j_0}_f) $,  is independent of $t$.  
\end{enumerate}
\end{defi}

We may summarize these two conditions by saying that the discriminant locus of $f_{red}$, denoted $\Delta_f$ and defined as the zero set of $D_{f_{red}}$,  is a Zariski equisingular family of plane curve singularities with respect to the parameter $t$ and the projection $(x,y) \to x$. 

We say that the family $V_t$ is \emph{generically linearly Zariski equisingular (with respect to the parameter $t$ and the system of coordinates $x,y,z$)} if it is Zariski equisingular after a generic linear change of coordinates $x,y,z$, that is, for a change from a Zariski open and dense subset of linear changes of coordinates.  The notion of \emph{generic Zariski equisingularity} 
is more complicated because the space of all possible local changes of  system of coordinates is infinitely dimensional.  We refer the reader to \cite{PP24} for a discussion of this notion.  A result of \cite{PP24} says that for a family of surface singularities  in $\C^3$ the notion of generic and generic linear Zariski equisingularity coincide.  This is still an open question for hypersurface singularities  in $\C^n$ for $n\ge 4$.

For a family of plane curve singularities, it is known that we can take as a generic system of coordinates $x,y$ any system for which the kernel of the projection $(x,y)\to x$, i.e. the $y$-axis, is transverse to the curve, see \cite{zariski65-S2}.  

 In general, if a hypersurface $X$ is the zero set of $g\in \C \{x_1, \ldots , x_n\}$, then the $x_n$-axis is transverse to $X$ if it is not included in the tangent cone $C_0(X)$ to $X$ at the origin.  In our case 
we say that the projection $(x,y,z,t) \to  (x,y,t)$ is  \emph{transverse} to $V$
if its kernel is not included in the tangent cone $C_0(V)$. 
This is equivalent to saying the multiplicity of $V$ at $0$ being  equal 
to $d$ if $f$ is given by \eqref{eq:family}.

\begin{rmk}\label{rk:newtransversality and equimultiplicity}
One important feature of the Zariski equisingularity is that it implies equimultiplicity, see \cite{zariski75} 
for the general case and \cite[page 531]{zariski65-S1} for families of plane curve singularities.    
The equimultiplicity of $V_t$, as family of reduced hypersurfaces, 
implies the normal pseudo-flatness of $V$ along $\{0\}\times (\C^l,0)$,  
intuitively the continuity of the tangent cone, see e.g. \cite{Hironaka69}
for the notion of normal pseudo-flatness.  Therefore, in Zariski equisingular families, 
if $V_0$ is transverse to the kernel of the projection so are $V_t$ for $t$ non-zero.
\end{rmk}

We say that the system of coordinates $x,y,z,$ for the surface singularity 
$V=f\inv(0)$ is \emph{transverse}  if the $z$-axis is transverse to $V$ at the origin in $\C^3$ and the $y$-axis is transverse to the 
zero set of the discriminant $D_f$ at the origin in $\C^2$. 
However, for a projection to be generic in the sense of Zariski equisingularity, it is not sufficient to assume that it is only transverse. In \cite{luengo85} Luengo gave an example of a family of surface 
singularities in $\C^3$ that is Zariski equisingular for one transverse 
projection but not for generic linear ones. 
In \cite{PP24} we introduced a new notion of the nested uniformly transverse system of coordinates and showed that it is sufficiently generic for the purpose of Zarski equisingularity.


\subsection{Nested uniformly transverse system of coordinates}\label{ssec:newnut}
Given  an analytic hypersurface $V = f^{-1} (0) \subset (\C^3,0 )$ defined by a reduced convergent power series $f\in \C\{x,y,z\}$.  
 For $b\in \C$ we denote by $\pi_b$ the projection 
$\C^3\to \C^ 2$ parallel to $(0,b,1)$, that is $\pi_b(x,y,z) = (x, y-bz)$.  
If $\pi_b$   restricted to $V$ is finite, for instance if its kernel is transverse to $V,$
then we denote by $\Delta_b$ the discriminant locus of such restriction $\pi_b{_{|V}}$.

\begin{defn} \label{defn:newsnt-generic}
We say that a local  system of coordinates $x,y,z$ at $0$
is \emph{nested-uniformly-transverse}, or \emph{\snt} for 
short, for $V$  if 
\begin{enumerate}
\item
The projection $(x,y,z) \to (x,y) $ is transverse to $V$, i.e., the tangent  cone $C_0(V)$  does not contain the $z$-axis. 
  \item
 The projection $(x,y) \to x $ is transverse to $\Delta_0$ at $0$.
 \item 
The family of singularities of plane curves $\Delta_b$ parameterised by 
 $b\in \C$ is equisingular for small $b$.  
\end{enumerate}   
\end{defn}

\begin{lemma}[{\cite[Lemma 4.2]{PP24}}]\label{lem:newgenericlinear==>nu}
Given a local coordinate system  at the origin in $\C^3$.  Then, after performing a generic linear change of coordinates, i.e.  from a Zariski open non-empty subset of linear changes of coordinates, the new system is \snt.  
\end{lemma}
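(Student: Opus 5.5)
The plan is to show that each of the three conditions in Definition~\ref{defn:newsnt-generic} holds on a Zariski open dense subset of the space of linear changes of coordinates $GL_3(\C)$. The intersection of finitely many such sets is again Zariski open and dense. So it suffices to show that each condition is \emph{algebraic} in the coefficients of the linear change and is satisfied by at least one change.

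Conditions (1) and (2) are the easy ones.
\begin{itemize}
\item For (1): the tangent cone $C_0(V)$ is a proper cone in $\C^3$. The set of linear changes sending the $z$-axis into $C_0(V)$ is therefore a proper Zariski closed subset.
\item For (2): once (1) holds, $\Delta_0$ is a plane curve germ, and the kernel of $(x,y)\to x$ must avoid its tangent cone, a finite set of lines. The difficulty is that $\Delta_0$ itself depends on the chosen direction of projection. To handle this, fix the direction $\ell$ of the $z$-axis (from the open set given by (1)), which determines $\Delta_0\subset \C^3/\ell$. The $y$-direction then only has to avoid finitely many lines in the quotient plane $\C^3/\ell$. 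This is an open dense condition on the remaining coefficients, and the set of all pairs of admissible directions is constructible and dense, so it contains a Zariski open dense subset.
\end{itemize}

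Condition (3) is the main obstacle. For a fixed choice of the $x$-coordinate, i.e.\ of the plane $H=\{x=0\}$, consider the pencil of directions $(0,b,1)$ lying in $H$. The curves $\Delta_b$ form an analytic family of plane curves parametrised by $b$. I would treat the total family over the $2$-parameter space of projection directions $\ell\in\mathbb P^2$ near a generic one, together with the choice of $H$. The key steps are as follows.
\begin{enumerate}
\item Take the family of discriminants $\Delta_\ell$ over an open set of $\ell$. By Zariski's theory for plane curves, equisingularity of a family of plane curves is equivalent to constancy of finitely many invariants, such as the Milnor number of the reduced curve or the multiplicity sequences of the branches.
\item These invariants are upper semicontinuous in the parameter. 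Hence the locus of $\ell$ where they attain their generic (minimal) values is the complement of a proper analytic, indeed algebraic-type, subset $Z\subset\mathbb P^2$. I would obtain the algebraicity from the homogeneity of the construction under linear changes, for instance via Teissier's polar invariants.
\item If $\ell_0=[0:0:1]\notin Z$, then for every line $\{(0,b,1)\}$ through $\ell_0$ the points $\ell_b$ near $\ell_0$ also lie outside $Z$. The family $\Delta_b$ then has constant invariants and is equisingular.
\end{enumerate}

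The delicate point is that the $\Delta_b$ live in different quotient planes $\C^3/\ell_b$. To compare them, I would use the explicit coordinates $\pi_b(x,y,z)=(x,y-bz)$, which realise all $\Delta_b$ in the same $(x,y)$-plane. This shows the family over $b$ is analytic, and it is the restriction of the family over the $\ell$-space to a line. Then $\ell_0\notin Z$, combined with semicontinuity, gives constancy for small $b$.

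Finally, the intersection of the open dense sets obtained for (1), (2) and (3) gives the required Zariski open nonempty set of linear changes.
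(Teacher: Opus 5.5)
Your core idea for condition (3) is right. The family $\Delta_b$ is the restriction to the line $a=0$ of the two-parameter family $\Delta_{a,b}$ of discriminants of $\pi_{a,b}(x,y,z)=(x-az,y-bz)$. So it suffices that $\Delta_{a,b}$ be equisingular near $(0,0)$, and then (3) holds for every plane $H$ through such a direction. (The paper does not reprove this lemma; it quotes [PP24, Lemma 4.2].)

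\textbf{Main gap: Zariski openness.} Your semicontinuity argument only yields a proper closed \emph{analytic} subset $Z$ of the Zariski open set $U\subset\mathbb P^2$ of directions not in $\mathbb P(C_0(V))$. Nothing excludes, say, a non-algebraic analytic curve in $Z$, or an infinite discrete set of bad directions accumulating on $\mathbb P(C_0(V))$. Taking closures does not help: Remmert--Stein does not apply, since $\dim Z$ may equal $\dim\mathbb P(C_0(V))=1$. So the complement of $Z$ need not contain a Zariski open set. The same objection applies to your unproved claim in (2) that the admissible pairs form a constructible set.

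Your suggested remedy does not work. No group acts on $V$, so ``homogeneity under linear changes'' gives nothing. Polar multiplicities do not control the equisingularity type of the discriminant family, which is in general non-reduced.

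The missing idea is algebraic dependence on the direction:
\begin{itemize}
\item $f(X+aZ,Y+bZ,Z)$ has coefficients in $\C[a,b]$, so it is uniformly rational in $(a,b)$.
\item Weierstrass preparation in $Z$ (denominators are powers of $c(a,b)$, the initial form of $f$ evaluated at $(a,b,1)$) and taking generalized discriminants preserve uniform rationality, as recalled after the definition of uniformly rational series.
\item Hence multiplicities, tangent cones and the vanishing of generalized discriminants of $\Delta_{a,b}$ are polynomial conditions on these coefficients.
\item Equisingularity of $\Delta_{a,b}$ near $(a_0,b_0)$ for $(a_0,b_0)$ in a Zariski open set then follows, e.g.\ from Proposition~\ref{prop:rationalinparameter} with $\tau=(a,b)$, used as in the proof of Lemma~\ref{newlem:(4)==>(1)}.
\end{itemize}

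\textbf{Secondary, fixable error in your step 2.} For non-isolated $V$ the discriminant $D_f$ is non-reduced for every direction. The Milnor number of the fibrewise reduced curve $(\Delta_\ell)_{red}$ is then not upper semicontinuous, because reduction can make it drop at special parameters. For example, $Y^2-sX^2$ has $\mu=1$ for $s\neq0$, but its reduced fibre at $s=0$ is smooth.

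Coalescence of components of $\Delta_\ell$ does occur in practice. For $f=(xy)^2+z^5$ the images of the two singular axes coincide whenever $\ell\subset\{z=0\}$ is not a coordinate axis, and such $\ell$ are transverse to $V$. So the ``generic (minimal) value'' description of $Z$ is wrong.

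To repair it:
\begin{itemize}
\item Reduce the total family $D(X,Y,a,b)$, or work with the first non-vanishing generalized discriminant of the total family as in Definition~\ref{def:newZAinfamilies}.
\item Include in $Z$ the proper analytic locus where the fibres are non-reduced.
\item Apply semicontinuity of $\mu$ to the fibres of the reduced total family, together with ``$\mu$-constant implies equisingular'' for plane curves.
\end{itemize}
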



The following two theorems show that for a \unt system of coordinates the projection $\pi_0$ is generic in the sense of Zariski Equisingularity,  with respect to both linear, Theorem \ref{thm:newgenlin},  and nonlinear, Theorem \ref{thm:newgeneric}, changes of coordinates. 

\begin{thm}[{\cite[Theorem 4.4]{PP24}}]\label{thm:newgenlin}
Suppose $x,y,z$ is a \unt system of coordinates for $V= V(f(x,y,z))$. 
 Then the family $V(f_{a,b})$ for 
\begin{align}\label{eq:newlinearfamily}
f_{a,b}(X,Y,Z):= f(X+aZ,Y+bZ,Z), 
\end{align}
is Zariski equisingular with respect to the parameters $(a,b)\in (\C^2,0)$. 
\end{thm}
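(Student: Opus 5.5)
The plan is to reduce Zariski equisingularity of the family $V(f_{a,b})$ to the equisingularity of a family of plane curves, and to obtain the latter from condition (3) of Definition \ref{defn:newsnt-generic}. There is one reduction for each parameter. The parameter $b$ is already built into the definition. The parameter $a$ has to be absorbed by a separate argument.

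\emph{Step 1: the parameter $b$.} For $a=0$, the map $(X,Y,Z)\mapsto (X,Y+bZ,Z)$ identifies $V(f_{0,b})$ with $V$. Under this identification, the projection $(X,Y,Z)\to(X,Y)$ becomes $\pi_b$. So the discriminant of $f_{0,b}$ in $Z$ defines exactly the curve $\Delta_b$. Since the $z$-axis is transverse, $f_{a,b}$ stays $Z$-regular of the same degree $d$ for small $(a,b)$, and the generalized discriminant $D^{j_0}$ depends analytically on $(a,b)$. By condition (3), $\{\Delta_b\}$ is an equisingular family of plane curves. By condition (2) and openness of transversality, the $Y$-axis stays transverse to $\Delta_b$. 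By Zariski's theorem \cite{zariski65-S2}, equisingularity of plane curve families is detected by any transverse projection. Hence the family is Zariski equisingular in $b$ at $a=0$.

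\emph{Step 2: the parameter $a$.} The change $X\mapsto X+aZ$ does not commute with the projection to $(X,Y)$, so this is the main obstacle. First I would try to swap the roles of $x$ and $y$. The key point is that the polar (critical) curve $C_b$ of $\pi_b|_V$ and its image $\Delta_b$ are governed by the two-parameter family of projections $\pi_{a,b}(x,y,z)=(x-az,y-bz)$. The discriminant $\Delta_{a,b}$ is the image under $\pi_{a,b}$ of the polar curve $C_{a,b}$.

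I would then use the generalized Lemme-clé \cite[Lemme 3.7]{brianconhenry80}, \cite[1.2.2]{teissierRabida82}. It says that $\pi_{a,b}(C_{a,b})$ is equisingular to the image of $C_{a,b}$ under a generic linear projection. Combined with the constancy of the equisingularity type of the polar curves in the family, this reduces equisingularity in $(a,b)$ to checking that the topological type of the pair (polar curve, projection) is constant. Condition (3) provides that constancy along the $b$-direction.

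To pass to the $a$-direction, I would use the $\C^*$-action and linear symmetry of the setting. Projection directions $(a,b,1)$ near $(0,0,1)$ form a 2-dimensional family. Equisingularity along the line $a=0$, together with Zariski-open genericity, should propagate as follows. The set of directions where the discriminant type is locally constant is the complement of an analytic curve in the space of directions. Condition (3) forces $(0,0)$ not to lie in that exceptional curve, unless the curve contains the whole line $a=0$. That case is excluded because Step 1 shows the type is constant there but equal to the type at nearby points off that line. This last comparison is the delicate point, and I would settle it via semicontinuity of the Milnor number of $\Delta_{a,b}$.

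\emph{Step 3: conclusion.} Once the discriminant curves $\Delta_{a,b}$ form an equisingular family, Zariski's plane-curve criterion gives conditions (1)–(2) of Definition \ref{def:newZAinfamilies} for the parameters $(a,b)$. This uses transversality of the $Y$-axis, which is open, and equimultiplicity of the first nonvanishing generalized discriminant of $D^{j_0}_{f_{a,b}}$. That completes the proof.
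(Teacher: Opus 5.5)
Your Steps 1 and 3 are fine: Step 1 is just conditions (2)--(3) of Definition \ref{defn:newsnt-generic}, read through the identification of $V(f_{0,b})$ with $V$ and Zariski's transverse-projection criterion. But Step 2, which carries the whole content of the theorem, is not proved. Setting aside the appeals to the Lemme-cl\'e and to ``linear symmetry'', your argument is as follows. If the non-equisingular locus $E$ in the $(a,b)$-plane does not contain the line $\{a=0\}$, then (3) and semicontinuity give $(0,0)\notin E$. The case $E\supset\{a=0\}$ is then excluded because the type along $\{a=0\}$ equals the type at nearby points off it. That last assertion is precisely what has to be shown, so the argument is circular. Condition (3) concerns only the pencil of directions $(0,b,1)$, i.e.\ the directions lying in the plane $\{x=0\}$. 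Note that $\pi_{a,b}$ followed by $(x,y)\mapsto x$ is $x-az$, independent of $b$: moving $b$ keeps the plane $\{x=az\}$ fixed, while moving $a$ tilts it. Nothing in your proposal prevents the whole pencil in this plane from having a constant but non-generic discriminant type. Semicontinuity cannot rule this out. Even where it applies, it only gives $\mu(\Delta_{a,b})\le\mu(\Delta_{0,b})$, which is compatible with a strict drop off the line. What you need is the reverse inequality, a bound on the invariants along $\{a=0\}$ by their generic values. Conditions (1)--(3) only involve the flag $z$-axis $\subset\{x=0\}$ and directions inside that plane. The theorem asserts that they nevertheless control all nearby directions, and producing that control is the real work. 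Your proposal never does this, and it uses (2) only for the transversality of the $Y$-axis.

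The tools you invoke do not supply this control either. The Lemme-cl\'e \cite[Lemme 3.7]{brianconhenry80} is a statement about the polar curve of a \emph{generic} projection of a surface with an \emph{isolated} singularity. Applying it at $\pi_{0,0}$ presupposes the genericity you are trying to prove. Moreover, for non-isolated $V$ the discriminant also contains the image of $\operatorname{Sing} V$, in general with multiplicity. For the same reason, the Milnor number of the reduced curves $\Delta_{a,b}$ need not be upper semicontinuous when $D_{f_{a,b}}$ is non-reduced, since components can merge under specialization (compare $y(y-sx)$ at $s=0$). This is why the paper works with generalized discriminants. For comparison, the paper does not argue directly. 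It obtains the statement as the special case of Theorem \ref{thm:newgenericwithparameter} (from \cite{PP24}) for the constant family, whose $\nu$-transverse Zariski equisingularity is exactly condition (3). The parametrized change of coordinates is $\varphi_{(a,b)}(X,Y,Z)=(X+aZ,Y+bZ,Z)$, which satisfies $D\varphi(0)=\id$ at $(a,b)=(0,0)$. The control of arbitrary small perturbations of the projection, which is the ingredient missing from your Step 2, is established in \cite{PP24}.
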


Consider the  family 
\begin{align} \label{eq:newgenericfamily}
f_\tau (X, Y, Z)= f(\impli_\tau  (X,Y,Z)) ,
\end{align}
where $\impli_\tau $ is a local, not necessarily linear, analytic change of coordinates depending analytically on a parameter $\tau\in (\C^s,0)$, 
i.e. $\impli \in (\C\{ x,y,z,\tau\}) ^3$.

\begin{thm}[{\cite[Theorem 4.4]{PP24}}]\label{thm:newgeneric}
Suppose $x,y,z$ is a \unt system of coordinates for $V= V(f(x,y,z))$.  Consider a local change of coordinates $\impli_\tau  (X,Y,Z) $ that depends analytically on a parameter 
$\tau\in (\C^s,0)$ and satisfies $\impli_\tau  (0)=0$ for all $\tau$, and $D\varphi_\tau  (0)=\id$ for $\tau =0$.  

Then $V(f(\impli_\tau  (X,Y,Z)))$ is Zariski equisingular with respect to the parameter $\tau $.  
\end{thm}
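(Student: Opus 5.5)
The plan is to reduce the nonlinear statement to the linear one, Theorem~\ref{thm:newgenlin}, by showing that only the direction of the projection kernel along the polar curve matters. Moreover, along the polar curve that direction deviates from the $z$-axis by an amount that tends to zero. Condition (3) of Definition~\ref{defn:newsnt-generic} (uniform equisingularity of $\Delta_b$) is what will absorb this deviation.

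\emph{Step 1: reduce to a tangent-to-identity change.} First I write $\impli_\tau = L_\tau\circ \derimpli_\tau$ with $L_\tau = D\impli_\tau(0)$ linear, $L_0=\id$, and $D\derimpli_\tau(0)=\id$. I then split $L_\tau$ into three parts:
\begin{itemize}
\item a linear change of the base variables $(X,Y)$ close to the identity;
\item a rescaling of $Z$ together with adding multiples of $X,Y$ to $Z$;
\item a shear $(X+aZ,Y+bZ,Z)$.
\end{itemize}
The first two parts change neither the fibres of $\pi_0$ nor, up to an analytic change of the base, the discriminant. Transversality of $x$ to $\Delta_0$ is an open condition, so it is preserved. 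The shear is handled by Theorem~\ref{thm:newgenlin}. It therefore suffices to treat $\derimpli_\tau$, composed with a linear change already known to give a Zariski equisingular family. Since Zariski equisingularity with respect to $(\tau,a,b)$ jointly is what is needed, I keep the shear parameters $(a,b)$ as extra parameters throughout.

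\emph{Step 2: describe $\Delta_\tau$ through the polar curve.} For $f$ reduced, the discriminant of $(X,Y,Z)\mapsto(X,Y)$ on $V(f\circ\impli_\tau)$ is the image of the polar curve
$$C_\tau=\{f\circ\impli_\tau=0,\ \partial_Z(f\circ\impli_\tau)=0\}.$$
Equivalently, in the original coordinates it is the image, under the nonlinear projection $\pi\circ\impli_\tau^{-1}$, of the set of points $p\in V$ where the fibre curve through $p$ is tangent to $V$. The tangent of that fibre at $p$ is $\partial_Z\impli_\tau(\impli_\tau^{-1}(p))$, which has the form $(a(p,\tau),b(p,\tau),1)$ up to scaling, with $a,b\to0$ as $(p,\tau)\to 0$. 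Thus $C_\tau$ is defined pointwise by the same equation as the polar curve of the linear projection $\pi_{a,b}$, but with $(a,b)$ depending on $p$. For the non-reduced case I run the same argument with the generalized discriminants of $f_{red}$.

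\emph{Step 3: compare Puiseux expansions.} I parametrize each branch of $\Delta_\tau$ by Puiseux series in $x$ and compare them with the branches of the linear family $\Delta_{a,b}$ at the frozen values $(a,b)=(a(p),b(p))$. The aim is two facts. First, each branch of $\Delta_\tau$ agrees with the corresponding branch of $\Delta_{a(p),b(p)}$ up to terms of order strictly greater than all characteristic exponents and mutual contact orders. Second, by condition (3) and Theorem~\ref{thm:newgenlin}, these exponents and contact orders are constant in $(a,b)$. Together these give equisingularity of $\Delta_\tau$ as a family of plane curves, with $x$ remaining transverse. Then the equisingularity criterion for plane curve families (Zariski, constancy of Puiseux characteristics and contact orders) yields the theorem.

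\emph{Main obstacle.} Step 3 is the crux. The error from replacing the varying direction $(a(p),b(p))$ by a frozen one is of order $|p|\cdot|\partial_Z\Delta|$, and I must show this is of higher order than every contact exponent between branches of $\Delta$. This is exactly the flavour of the key lemma of Briançon--Henry and Teissier (\cite[Lemme 3.7]{brianconhenry80}, \cite[Lemme clé 1.2.2]{teissierRabida82}), and I would imitate its proof as follows. Parametrize the branches of $V$ over the polar curve (polar-wedge style, as in \cite{BNP2014}). Express $\partial_b\Delta_b$ through the difference of roots of $f$ in $z$. Then use $|\partial_b\Delta_b|\lesssim |z|\,|\partial_y\Delta_b|$ along $\Delta$ to check that a perturbation of $b$ of size $o(1)$ moves each branch by less than its contact with the others.
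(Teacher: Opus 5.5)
Within the paper this statement is not proved directly. It is quoted from \cite{PP24}, and the text only observes that it is the special case of Theorem~\ref{thm:newgenericwithparameter} for the constant family $V_t\equiv V$ (which is $\nu$-transverse Zariski equisingular precisely by condition (3) of Definition~\ref{defn:newsnt-generic}), with $\impli(X,Y,Z,T)=(\impli_\tau(X,Y,Z),T)$. Your direct route has a genuine gap in Step 3, where the two facts you list do not combine. If the direction is frozen at $p=0$, the first fact is false (see below). If it is frozen pointwise, your comparison curve is $x\mapsto\delta(x,\beta(p(x)))$, obtained by substituting a non-constant function for the parameter of $\Delta_b$. Constancy of the invariants for constant $b$ says nothing about such a curve: $\delta(x,b)=bx+x^{7/2}$ is equisingular in $b$, but $b=x^{3/2}$ creates the characteristic exponent $5/2$. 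Worse, the estimate you set out to prove in your last paragraph is false under the hypotheses, even for isolated singularities. That estimate says the error, of size about $|p|\,|\partial_b\delta|$ with $\partial_b\delta=-z$, is of higher order than every contact exponent, i.e.\ each root moves by less than its contact with the others. Take $f=z^3-3x^2z+2x^3+y^3-x^m$ with $m\ge 7$. Then $D_f$ is a constant times $(y^3-x^m)(y^3+4x^3-x^m)$, one checks that $x,y,z$ are $\nu$-transverse, and the critical points over the roots $\omega^k x^{m/3}$ ($\omega^3=1$) have $z=x+\cdots$. The change $(X,Y,Z)\mapsto(X,Y+\tau XZ,Z)$, tangent to the identity, turns these roots into $-\tau x^2+\omega^k x^{m/3}+\cdots$. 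Each root moves at order $2<m/3$, their mutual contact, yet the family stays equisingular because the displacement is common to all of them.

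What must be controlled is the relative displacement of any two Puiseux roots $\delta_i,\delta_j$, conjugate or not. Let $p_i=(x,y_i,z_i)$ be the critical point over $\delta_i$ and $\beta$ the direction function of a change tangent to the identity. The relative displacement is then roughly $\beta(p_i)(z_i-z_j)+(\beta(p_i)-\beta(p_j))z_j$, plus a term $O(|p|\,|p_i-p_j|)$ from the nonlinearity of the projection. This is of order $>\operatorname{ord}_x(\delta_i-\delta_j)$ as soon as $\operatorname{ord}_x(z_i-z_j)\ge\operatorname{ord}_x(\delta_i-\delta_j)$, i.e.\ the critical points over two roots do not separate faster than the roots themselves. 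This inequality is the missing ingredient. It cannot be imported from the key lemma of Brian\c{c}on--Henry and Teissier, which needs an isolated singularity and a generic projection, whereas here $V$ may have a one-dimensional singular locus and the coordinates are only $\nu$-transverse. It has to be extracted from condition (3). Parametrize the roots of $\Delta_b$ and the critical points analytically in $(x^{1/N},b)$, differentiate $f(p_i)=0$ in $b$ and use $f_z=-bf_y$; this gives $\partial_b\delta_i=-z_i$. The identity also holds for branches coming from $\sing V$, which your Step 2 does not really handle: $f$ is reduced, and it is $D_f$ that is not. Constancy of $\operatorname{ord}_x(\delta_i-\delta_j)$ in $b$ then forces $\operatorname{ord}_x(z_i-z_j)\ge\operatorname{ord}_x(\delta_i-\delta_j)$.
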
 



\subsection{\unt Zariski equisingular families}
\label{ssec:newZEfamiliesdim2}

Consider an analytic family 
\begin{align}\label{eq:newfwithparameter}
f(x,y,z, t)= f_t(x,y,z) \in \C\{x,y,z,t\}, \quad f_t(0)=0,
\end{align} 
with all $f_t$ reduced, where $t\in (\C^l,0)$ is a parameter. It induces a family surface singularities $V_t=f_t\inv  (0)$ in $\C^3$.

\begin{defn} \label{defn:newsnt-genericZE}
We say that the family $V_t$ is \emph{\unt Zariski equisingular} (with respect to a given system of coordinates) if this  system of coordinates 
$x,y,z$ is \unt for $V_0= f_0^{-1} (0)$ and the family of the discriminant loci $\Delta_{b,t} = 
D_f\inv (0)$ of the projection 
$(x,y,z,t) \to (x, y-bz,t)$  parameterized by $(b,t) \in (\C\times \C ^l,0)$ 
is an equisingular family of plane curve singularities.    
\end{defn}

Theorems \ref{thm:newgenlin} and \ref{thm:newgeneric} are special cases of Theorem 
\ref{thm:newgenericwithparameter}. 
Consider an arbitrary  analytic change of local coordinates 
\begin{align}\label{eq:newchange} (x,y,z,t) = \impli (X,Y,Z,T), 
\end{align}
i.e.  $\impli (X,Y,Z,T)  = (\impli_1, \impli_2, \impli_3, \impli_4)$ with $\impli_4 $ 
depending not only on $T$ but also on $X,Y,Z$.

\begin{thm}[{\cite[Theorem 4.6]{PP24}}] \label{thm:newgenericwithparameter}
Suppose that the family $V_t$  is  \unt   Zariski equisingular.  Let \eqref{eq:newchange} be a local change of coordinates that depends analytically on a  parameter $\tau\in (\C^s,0)$ and satisfies:  $\impli_1 (0,T)\equiv \impli_2 (0,T)\equiv \impli_3 (0,T) \equiv 0$ for all $\tau$, and $D\varphi (0)=\id$ for $\tau =0$.  

Then, the family of the zero sets of $F (X, Y, Z, T)= f(\impli (X,Y,Z,T))$ is Zariski equisingular
 with respect to  parameters  $T$ and $\tau$.  
\end{thm}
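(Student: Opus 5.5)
The plan is to reduce the statement to equisingularity of a family of plane curves and then compare with the linear family controlled by the \unt hypothesis. By Definition \ref{def:newZAinfamilies}, and since Zariski equisingularity of plane curve families is equivalent to constancy of the Puiseux characteristic pairs of the branches and of their mutual intersection multiplicities \cite{zariski65-S2}, it suffices to show the following. The discriminant locus $\Delta^F_{T,\tau}\subset(\C^2,0)$ of the projection $(X,Y,Z)\mapsto(X,Y)$ restricted to $F_{T,\tau}^{-1}(0)$ must be an equisingular family of plane curves in the parameters $(T,\tau)$, and the $Y$-axis must remain transverse to it. Transversality of the $Z$-axis to $F_{T,\tau}^{-1}(0)$ and of the $Y$-axis to $\Delta^F_{T,\tau}$ follow from the corresponding properties for $V_0$. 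For this I would use $D\impli(0)=\id$ at $\tau=0$, together with the equimultiplicity and normal pseudo-flatness of Remark \ref{rk:newtransversality and equimultiplicity}, which come from the assumed equisingularity of $\Delta_{b,t}$.

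Next I would describe the discriminant geometrically. In the original coordinates, the fibres of $(X,Y,Z)\mapsto(X,Y)$ become a family of smooth curves $\gamma_{X,Y}$ through points near $0$, with tangent directions close to $(a,b,1)$. Here $a,b$ are small and vary analytically with the point and with $(T,\tau)$. The critical locus $C^F_{T,\tau}$ (a relative polar curve) is the set of points of $V_t$ where $\gamma$ is tangent to $V_t$, and $\Delta^F$ is its image. I would first treat the linear part. By Theorem \ref{thm:newgenlin} and its parametric version (which is just the \unt hypothesis on $\Delta_{b,t}$ combined with the invariance of the family in the $a$-direction, via the substitution $X\mapsto X+aZ$), the family of linear polar curves and their images $\Delta_{a,b,t}$ is equisingular in $(a,b,t)$. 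In particular, the polar branches admit Puiseux parametrisations $x=s^{n}$, $y=\sum c_k(a,b,t)s^k$, $z=\dots$, whose characteristic exponents and mutual contact orders are independent of $(a,b,t)$.

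The core step is to show that replacing the straight lines of direction $(a,b,1)$ by the curves $\gamma$ does not change these invariants. I would first construct Puiseux parametrisations of the branches of $C^F_{T,\tau}$ as perturbations of those of the linear polar curve with $(a,b)=(a,b)(s,T,\tau)$ evaluated along the branch. This would use the implicit function theorem on the polar equations, in the spirit of the lemma clé of Briançon--Henry/Teissier \cite[Lemme 3.7]{brianconhenry80}, \cite{teissierRabida82}. Then I would show that the difference between the images of corresponding branches of $\Delta^F$ and of $\Delta_{a,b,t}$ for frozen $(a,b)$ has order in $s$ strictly larger than every characteristic exponent and every contact exponent between branches. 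The point is that the deviation of $\gamma$ from its tangent line is quadratic in the distance to the origin. Moreover, the equisingularity of $\Delta_{a,b,t}$ in $(a,b)$ means precisely that letting $(a,b)$ vary to higher order along a branch does not create new coincidences of Puiseux coefficients.

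I expect this comparison of exponents to be the main obstacle. It is not enough for the perturbation to be of higher order in $s$ than the multiplicity. It must exceed the largest relevant exponent, namely the maximal contact between polar branches, and that exponent could a priori be large. I would first try to bound it by the uniform (in $b$) structure: derivatives in $b$ of the Puiseux coefficients of $\Delta_b$ vanish to order at least the relevant contact exponents. Applying a Taylor expansion in $(a,b)$ would then absorb the nonlinear terms. Once branch invariants are constant, equisingularity in $(T,\tau)$ follows, and condition (2) of Definition \ref{def:newZAinfamilies} is the resulting equimultiplicity.
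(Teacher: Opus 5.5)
You have a genuine gap in the step you treat as the easy linear part. Equisingularity of $\Delta_{a,b,t}$ in $(a,b,t)$ does not follow from the $\nu$-transverse hypothesis plus an ``invariance in the $a$-direction''; no such invariance exists.

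For $a\neq 0$ the kernel $(a,b,1)$ leaves the plane $x=0$. The polar curve becomes $\{f=0,\ f_z+af_x+bf_y=0\}$ and the second projection becomes $x-az$. The hypothesis controls only $a=0$.

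If you try to reduce to the $b$-family, a smooth point of that polar curve lies on $P_{0,b'}=\{f=0,\ f_z+b'f_y=0\}$ with $b'=b+af_x/f_y=b-a\,dY/dx$, where $dY/dx$ is the slope of $\Delta_{b'}$. If $Y=c_1x+cx^{\beta_1}+\cdots$, then $b'$ varies like $x^{\beta_1-1}$ along the branch. The induced shift $(b'-b)z$ of the image is therefore of order $x^{\beta_1}$, exactly at the first characteristic exponent, not beyond it.

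Passing from the pencil of kernels in $\{x=0\}$ to all nearby kernels is precisely what separates $\nu$-transverse from merely transverse coordinates (cf.\ \cite{luengo85}). In the paper, Theorem~\ref{thm:newgenlin} is \emph{deduced} from the statement you are proving: take $\impli(X,Y,Z,T)=(X+aZ,Y+bZ,Z,T)$ with $\tau=(a,b)$. The version with $t$ that you need is literally a special case of it, so your input already contains a substantial part of the conclusion.

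The core comparison, which you flag yourself, is also not carried out, and the mechanism you propose does not work.
That $\gamma$ deviates quadratically from a line only shows that the kernel direction varies by $O(|p|)=O(|x|)$ along a branch of $C^F_{T,\tau}$. This gives an $O(x^2)$ perturbation of the image. But characteristic and contact exponents of the discriminant can exceed $2$: e.g.\ $V=\{z^2=y^2-x^5\}$ is $\nu$-transverse with $\Delta_b=\{Y^2=(1-b^2)x^5\}$.

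Equisingularity in the parameters does not make a branch-dependent substitution harmless either. The family $y=ux+x^{3/2}$ is equisingular in $u$, yet substituting $u=u_0-x^{1/2}$ kills the characteristic term.

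Your claim that $\partial_b$ of the Puiseux coefficients vanishes up to the contact exponents is also false. The curve $\{(y-bx)^2=x^4\}$ has two branches $y=bx\pm x^2$ with contact $2$, equisingular in $b$, while their common coefficient of $x$ moves to first order. Only differences between branches behave as you say, and that does not help, since the value of $(a,b)$ substituted along each branch differs from branch to branch.

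What is needed, roughly, is the following:
\begin{enumerate}
\item[(i)] The variation of the kernel direction along a branch, and its difference $O(|p_i-p_j|)$ between two branches, must be controlled by the Puiseux data of the branches in $\C^3$. These branches are the polar branches and, for non-isolated $V_t$, the components of the singular locus.
\item[(ii)] These characteristic exponents and mutual contacts must coincide with those of their images in the plane, uniformly in $(b,t)$.
\item[(iii)] The $a$-component must be treated separately, as described above.
\end{enumerate}
Point (ii) is the content of the lemme clé \cite[Lemme 3.7]{brianconhenry80}, \cite{teissierRabida82}, which \cite{PP24} builds into its parametrisation of polar wedges. In your sketch the lemme clé is used only to produce parametrisations via the implicit function theorem, so the decisive step remains open.
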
 


Theorem \ref{thm:newgeneric} follows immediately from 
Theorem \ref{thm:newgenericwithparameter}.  To show 
Theorem \ref{thm:newgenlin} it suffices to consider the local change of variables 
$(x,y,z) \to (x-az,y-bz, z)$ parameterized by $\tau =(a,b)$ and apply Theorem \ref{thm:newgenericwithparameter}.

We will also need the following version of \cite[Lemma 5.2]{PP24}.  

\begin{lem}\label{newlem:(4)==>(1)}
If a family of surface singularities $V_t$ is Zariski equisingular in a generic linear system of coordinates then it is  \unt Zariski equisingular.  
\end{lem}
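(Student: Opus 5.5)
The plan is to show that the coordinates $x,y,z$ are \unt for $V_0$, and then that the two‑parameter family $\Delta_{b,t}$ is equisingular.

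\emph{Transversality for $V_0$.} By Lemma \ref{lem:newgenericlinear==>nu}, a generic linear change of coordinates produces a \unt system for $V_0$. By hypothesis, Zariski equisingularity holds on a Zariski open dense set of linear changes. The intersection of two Zariski open dense subsets of $GL_3(\C)$ is again open and dense. So we may choose the linear change so that $x,y,z$ is \unt for $V_0$ and, at the same time, $V_t$ is Zariski equisingular in these coordinates. After this choice the first half of Definition \ref{defn:newsnt-genericZE} is satisfied.

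\emph{Equisingularity in $t$ for each fixed $b$.} The conditions we have imposed are open in the linear change. Composing with the linear maps $(x,y,z)\mapsto(x,y-bz,z)$ for small $b$ therefore stays inside the good open set, possibly after shrinking. So for each small fixed $b$, the family $V_t$ is Zariski equisingular with respect to $\pi_b$. This means that $t\mapsto \Delta_{b,t}$ is an equisingular family of plane curves. In addition, for $t=0$ the family $b\mapsto\Delta_{b,0}$ is equisingular by condition (3) of Definition \ref{defn:newsnt-generic}.

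\emph{Equisingularity in $(b,t)$ jointly: the main obstacle.} Knowing equisingularity along each coordinate direction does not automatically give equisingularity in both parameters together. My approach is to work with the multiplicity of the discriminant of the discriminant, as in condition (2) of Definition \ref{def:newZAinfamilies}. For plane curves, equisingularity of a family is equivalent to constancy of an upper semicontinuous numerical invariant, for instance the Milnor number of $\Delta_{b,t}$, or the multiplicities in the Zariski discriminant chain for a transverse projection. Let $m(b,t)$ denote this invariant.

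From the previous step we have $m(b,t)=m(b,0)$ for each fixed small $b$, and $m(b,0)=m(0,0)$. Hence $m$ is constant on a neighbourhood of the origin in $(b,t)$. Constancy of $\mu$, together with the transversality of the $y$‑axis to $\Delta_{0,0}$, then gives equisingularity of $\Delta_{b,t}$ as a family over $(b,t)$, using the Lê–Ramanujam/Zariski results for plane curves.

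The delicate point is uniformity. The neighbourhood of $t=0$ on which $t\mapsto\Delta_{b,t}$ is equisingular must not shrink as $b$ varies. To handle this, I would apply the hypothesis to the whole analytic family of linear changes parameterised by $b$ at once, that is, consider $f(x,y+bz,z,t)$ over the parameter $(b,t)$. If this uniformity fails, my fallback is to invoke Theorem \ref{thm:newgenericwithparameter}, applied to the change of coordinates $(x,y+bz,z,t)$ with $\tau=b$, to conclude Zariski equisingularity in $(T,\tau)$ directly.
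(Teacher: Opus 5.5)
\textbf{There is a genuine gap at the step that carries the content of the lemma.} Your first two steps are sound. Intersecting the two Zariski open sets gives coordinates that are $\nu$-transverse for $V_0$ and such that, for every small $b$, the family $t\mapsto\Delta_{b,t}$ is equisingular. The family $b\mapsto\Delta_{b,0}$ is equisingular by condition (3) of Definition \ref{defn:newsnt-generic}.

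The gap is the passage to joint equisingularity in $(b,t)$. The sentence ``hence $m$ is constant on a neighbourhood of the origin in $(b,t)$'' does not follow. Fibrewise equisingularity only gives constancy on $\{b\}\times U_b$, with $U_b$ depending on $b$. Upper semicontinuity alone does not stop $U_b$ from shrinking as $b\to 0$. For example, the function equal to $1$ on $\{|t|\le |b|\}\cup\{b=0\}$ and $0$ elsewhere is upper semicontinuous and constant on every such slice and on $\{t=0\}$, but it is not constant near the origin.

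Neither of your remedies closes this. The hypothesis concerns Zariski equisingularity with respect to $t$ for one linear change at a time. ``Applying it to $f(x,y+bz,z,t)$ over $(b,t)$ at once'' is therefore not something the hypothesis provides; it is the conclusion. Theorem \ref{thm:newgenericwithparameter} assumes that $V_t$ is already $\nu$-transverse Zariski equisingular, so invoking it is circular.

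The paper supplies exactly this uniformity with Proposition \ref{prop:rationalinparameter}. The series $f(X,Y+bZ,Z,t)$ is polynomial, hence uniformly rational, in $b$. The implication $(2)\Rightarrow(3)$ then upgrades ``equisingular in $t$ for all $b$ off a finite set'' to ``equisingular in $(t,b)$'' at a generic $b$.

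You could also repair your own route by analytic continuation, provided $m$ is an analytic condition rather than, say, $\mu$ of a possibly non-reduced $\Delta_{b,t}$. Take $G(x,t,b)=D^{i_0}_{D_f}$, the first non-identically-zero generalized discriminant of $D_f$ with respect to $y$.
\begin{enumerate}
\item Conditions (1)--(3) of $\nu$-transversality, together with fibrewise equisingularity, give three facts. The Weierstrass polynomials in $z$ and in $y$ specialise correctly at each small $b$. The fibre at each $b$ has index $\idiscr$ equal to $i_0$. The order $\mathrm{ord}_x G(x,0,b)=k$ is independent of $b$.
\item Fibrewise equisingularity then gives $\partial_x^j G(0,t,b)=0$ for $j<k$ and $t\in U_b$.
\item These functions are analytic in $(t,b)$ on a fixed neighbourhood. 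For each $b$ all their Taylor coefficients in $t$ vanish, so they vanish identically.
\item Hence $G=x^kH$ with $H(0)\neq 0$, which is the required equimultiplicity.
\end{enumerate}
Some argument of this kind is what your proposal is missing.
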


We will sketch its proof because we need a version for Zariski equisingularity of families of singularities and not, as in \cite[Lemma 5.2]{PP24}, for Zariski equisingularity along a subspace.  
Its proof is based on  \cite[Proposition 3.7]{PP24} that is actually stated in \cite{PP24} for 
Zariski equisingularity of families of singularities and not for Zariski equisingularity along a subspace. Thus the above Lemma \ref{newlem:(4)==>(1)} is an easier conclusion of \cite[Proposition 3.7]{PP24} than Lemma 5.2 of \cite{PP24}. 

First, let us recall the statement of \cite[Proposition 3.7]{PP24}.

\begin{prop}[{\cite[Proposition 3.7]{PP24}}]\label{prop:rationalinparameter}
Suppose that $f(x,t,\tau)\in \kk(\tau) [[x,t]]$, $f(0,t)\equiv 0$, is uniformly rational in $\tau$ and regular in $x_{r+1}$, where $x = (x_1, \ldots, x_{r+1})$, $t=(t_1, \ldots , t_l), \tau=(\tau_1, \ldots, \tau_s)$.  Denote by $\pi$  the projection 
$(x_1, \ldots, x_r, x_{r+1}, t)\to (x_1, \ldots, x_r, t)$. 
Then the following conditions are equivalent:
\begin{enumerate}
  \item 
$V= V(f)$, understood as an algebroid  hypersurface over $\kk(\tau)$, 
is Zariski equisingular with respect to the parameter $t \in (\kk^l,0)$ (and the projection $\pi$).  
\item
There is a non-empty Zariski open  subset of $U \subset \kk ^s$ such that 
for every $ \tau \in U$ the family 
$V_{ \tau } =\{f(x,t,\tau) =0\}$ 
is Zariski equisingular with respect to the parameter $t \in (\kk^l,0)$ (and the projection $\pi$).  
\item
There exists $ \tau_0 \in \kk ^s$ such that $V$, understood 
as an algebroid hypersurface of $(\kk^{r+1+l+s},\,  (0,0,\tau_0))$, 
is Zariski equisingular with respect to the local parameters 
$(t,\tau-  \tau_0) \in (\kk^l \times \kk^s,0)$ (and the projection $(x_1, \ldots ,x_r, x_{r+1}, t,\tau)\to (x_1, \ldots ,x_r, t, \tau)$).  
\end{enumerate}
\end{prop}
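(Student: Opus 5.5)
The plan is to argue by induction on $r$, following Zariski's recursive definition of equisingularity. The point is that every object in the recursion can be read off from finitely many polynomial (or rational) expressions in the coefficients of $f$. By \emph{uniformly rational} I understand that there is a nonzero polynomial $h(\tau)$ such that all coefficients of $f$ lie in $\kk[\tau]_h$. Here "all objects" means the Weierstrass polynomial, the generalized discriminants $D^i$, the index $\idiscr$, the order of regularity in the next variable, and the terminal condition.

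\textbf{Step 1: specialization lemmas.} I will prove three facts.
\begin{enumerate}
\item[(a)] Weierstrass preparation with respect to $x_{r+1}$ preserves uniform rationality. If $f$ is regular of order $d$ in $x_{r+1}$ over $\kk(\tau)$, then the coefficient of $x_{r+1}^d$ in $f(0,x_{r+1},0,\tau)$ is a nonzero rational function $c(\tau)$. The division algorithm then produces coefficients that are polynomials in the coefficients of $f$ divided by powers of $c$. So on $\{hc\neq 0\}$ the preparation commutes with setting $\tau=\tau_0$.
\item[(b)] The generalized discriminants $D^i_f$ are universal polynomials in the coefficients $a_j$. Hence they are uniformly rational and commute with specialization.
\item[(c)] A uniformly rational series vanishes identically in $\kk(\tau)[[x,t]]$ if and only if its specialization vanishes for $\tau$ in a Zariski dense set. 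This in turn holds if and only if its Taylor expansion in $(x,t,\tau-\tau_0)$ vanishes at some admissible $\tau_0$. If the series is nonzero, one nonzero coefficient gives a nonempty Zariski open set on which the specialization is nonzero.
\end{enumerate}
From (c) it follows that $\idiscr(f)$, the multiplicities, and the orders of regularity computed over $\kk(\tau)$ agree with the specialized ones on a nonempty Zariski open set. At an arbitrary $\tau_0$ they can only jump up: the specialized order is at least the generic order.

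\textbf{Step 2: (1)$\Leftrightarrow$(2).} In each step of the recursion the conditions to check are of two kinds. The first kind is "$D$ is regular in the next variable of order $m$", or "the first nonzero discriminant has index $i_0$"; these are finitely many coefficient-nonvanishing conditions together with identical vanishing of lower ones. The second kind is the terminal condition at $r=0$: $g(x_1,t)=\text{unit}\cdot x_1^m$, i.e.\ $g(0,t)\equiv0$ in the relevant orders. By Step 1 each condition over $\kk(\tau)$ translates into the same condition for all $\tau$ in a Zariski open set, and at each stage only finitely many such open sets are intersected. This gives (1)$\Rightarrow$(2).

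For the converse, assume (2) on $U$. The generic index, order and multiplicity are attained on a Zariski open set $U'$, and $U\cap U'\neq\emptyset$. At a point of $U\cap U'$ the specialized conditions coincide with the generic ones. Identical vanishing on the dense set $U\cap U'$ lifts to identical vanishing over $\kk(\tau)$ by (c). Induction on $r$ applied to the uniformly rational discriminant $D^{j_0}_f$ completes the argument.

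\textbf{Step 3: (1)$\Leftrightarrow$(3).} Choose $\tau_0$ in the good open set from Step 2 and expand in $\tau-\tau_0$. The discriminants computed in $\kk[[x,t,\tau-\tau_0]]$ are the Taylor expansions of the rational ones. Orders and indices at $(0,0,\tau_0)$ equal the generic ones, because the relevant leading coefficient is nonzero at $\tau_0$. Identities valid over $\kk(\tau)$ remain identities. Hence (1)$\Rightarrow$(3).

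For (3)$\Rightarrow$(1), take any $\tau_0$ as in (3). Regularity of order $m$ at $(0,0,\tau_0)$ forces the generic order to be at most $m$. I expect the main obstacle here: I must show that it equals $m$ and that the generic index $\idiscr$ agrees with the one at $\tau_0$. For this I will use that Zariski equisingularity with respect to $(t,\tau-\tau_0)$ implies equimultiplicity along the parameter space (Remark~\ref{rk:newtransversality and equimultiplicity}), so multiplicities and indices are constant near $\tau_0$. The second ingredient is that a discriminant of the reduced Weierstrass polynomial is identically zero at $\tau_0$ in the expansion if and only if it is identically zero over $\kk(\tau)$, by (c). Once the numerical data agree, the recursive conditions transfer verbatim, and the terminal identities transfer by (c). This yields (1).
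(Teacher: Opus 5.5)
Your arguments for (1)$\Leftrightarrow$(2) and (1)$\Rightarrow$(3) are fine. They rest on the two properties the paper singles out: specialization on the Zariski open set where the denominators and the successive leading coefficients do not vanish, and stability of uniform rationality under Weierstrass preparation and generalized discriminants. The gap is in (3)$\Rightarrow$(1), at exactly the step you flag.

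There $\tau_0$ is arbitrary. It may be a zero of the leading coefficient $c(\tau)$ of $f(0,x_{r+1},0,\tau)$, or of a leading coefficient at a later stage of the recursion. In that case the Weierstrass polynomial $P$ of $f$ at $(0,0,\tau_0)$ has degree $m_0$ larger than the generic degree $d$. It is \emph{not} the Taylor expansion of the generic Weierstrass polynomial, whose coefficients may even have poles at $\tau_0$. Over $\mathrm{Frac}\,\kk[[\tau-\tau_0]]$ it splits as the generic one times a nontrivial cofactor, so its (generalized) discriminants are not the expansions of the generic ones. Your fact (c) compares specializations of one and the same series, so it gives nothing here.

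Equimultiplicity of $V$ along $\{x=0\}$ does not exclude this situation. It gives $f\in(x)^m$ with $m=\mult_{(0,0,\tau_0)}f$, hence $d\ge m$. This forces $d=m_0$ only if the projection is transverse at $\tau_0$, and neither Zariski's definition nor the proposition assumes transversality at any stage of the recursion. For instance, $f=x_1+\tau x_2+x_2^2$ has multiplicity $1$ at every point of $\{x=0\}$, yet its order in $x_2$ jumps from $1$ to $2$ at $\tau=0$. Equimultiplicity also says nothing about $\idiscr$. Finally, ``constant near $\tau_0$'' has no meaning in the algebroid setting without a further argument relating $\tau_0$ to the generic point.

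What is missing is a lemma saying that Zariski equisingularity along the parameter space at $(0,0,\tau_0)$ rules out such jumps. Concretely, you need $V\cap\pi^{-1}(\{x=0\})=\{x=0\}$, i.e.\ $P(0,x_{r+1},t,\tau)=x_{r+1}^{m_0}$, and the same statement for each successive discriminant.

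For $r=1$ this follows from Abhyankar--Jung. The first nonzero generalized discriminant is a unit times $x_1^k$, so the roots of $P_{red}$ lie in $\kk[[x_1^{1/q},t,\tau-\tau_0]]$. Their pairwise differences are units times powers of $x_1^{1/q}$, and these powers are positive because all roots vanish at the origin. Hence all roots coincide on $x_1=0$, where $0$ is a root.

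In general this is an additional property of Zariski equisingularity that must be proved, for example inductively. Equimultiplicity of $V$ alone does not give it. Once it holds at every stage, all successive leading coefficients are nonzero at $\tau_0$. Then the Weierstrass polynomials and generalized discriminants at $(0,0,\tau_0)$ are the Taylor expansions of the generic ones. The equality of the indices $\idiscr$ and the transfer of the terminal identity $g=u\,x_1^m$ then follow from injectivity of Taylor expansion alone.
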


\begin{defn}[{\cite[Definition 3.5]{PP24}}]
We say that a power series $f\in \kk(\tau) [[x]]$, 
\begin{align}
f(x,\tau) = \sum_\alpha a_\alpha(\tau ) x^\alpha,  
\end{align}
$x=(x_1, \ldots, x_{r+1}) $, 
$\tau=(\tau_1, \ldots, \tau_s)$, \emph {is uniformly rational in $\tau$}
if the  coefficients of $f$ are of the 
form $a_\alpha = b_\alpha/c^{|\alpha|}$, 
where $b_\alpha, c $ are polynomials $b_\alpha, c \in \kk[\tau]$, 
and $c\ne 0$, or, equivalently, $f\in \kk[\tau] [[\frac x c]]$. 
\end{defn}

The notion of uniformly rational of \cite[Section 3.3]{PP24}   
is a version of Rond's Eisenstein's series, see  \cite{rond18, rondeisensteins, ranktheorems}.  
This notion has two convenient properties that we used in the proof of Proposition \ref{prop:rationalinparameter}.  
Firstly, for a power series $f$ uniformly rational in $\tau$,  the 
 specialization $\overline \tau \to \tau\in \kk$ makes sense for $\overline \tau$ from a non-empty Zariski open subset, more precisely for 
 if $c(\overline \tau) \ne 0$.  
Secondly,  it follows from a classical proof of the Weierstrass preparation theorem that the property of being
 uniformly rational in $\tau$ is preserved by taking the associated Weierstrass polynomial, and hence by taking the discriminant of $f$ as well. See \cite[Section 3.3]{PP24} for details.

\begin{proof}[Proof of Lemma \ref{newlem:(4)==>(1)}]
We apply Proposition \ref{prop:rationalinparameter} to the series $f(X,Y+bZ, Z, t)$ whose coefficients are polynomials in $b$,  and hence, trivially, uniformly rational.

Let $V_t= f_t\inv (0)$, where $f$ is as in \eqref{eq:famwithparameter}.  
By assumption for all but finite values of $b\in \C $ the discriminant of 
$f$ with respect to the projection 
$$
\pi_b(x,y,z,t) = (x, y-bz,t) 
$$ 
is Zariski equisingular as a family of plane curve singularities depending on $t$. 
Now the implication $(2) \Longrightarrow (3)$ of Proposition \ref{prop:rationalinparameter} shows that 
$f(X,Y+bZ, Z, t)$ is a Zariski equisingular family parameterized by 
$(t,b)$, for all but finitely many $b$.  This implies that the family is   \unt Zariski equisingular.  
\end{proof}


\section{The \multseq }
\label{sec:localinvariant}

Given  an analytic hypersurface $V = f^{-1} (0) \subset (\C^3,0)$ defined by an  
$f\in \C \{x,y,z\}$ that we assume reduced.  The \multseq of $V$ takes into account the multiplicities of the successive discriminants of $f$.  
Since the discriminant $D_f$ of $f$ is not necessarily reduced we have to first replace $D_f$ by its reduced form $(D_f)_{red}$ or,  what is more convenient, to use the generalized discriminants of $D_f$.  



\begin{defn} \label{defn:untmultiplicitysequence}
Suppose that for a germ $V= f\inv (0)$ with $f$ reduced a local  system of coordinates 
$x,y,z$ at the origin is \unt.  
Then the \emph{\multseq of $(V,0)$}  is defined by 
$$\snu^* (V) = (\mult_0  (V), \mult_{0} (D_f ), i_0,  
\mult_{0} 
 (D^{i_0}_{D_f} )),$$
where $i_0=\idiscr (D_f) $.
\end{defn}

In the above defintion the value of $i_0=\idiscr (D_f) $ is important.  We do not get the same information if we just replace $D^{i_0}_{D_f} $  by $D_{(D_f)_{red}} $. 

Note that the  \multseq is defined with respect to a  \unt system of coordinates and 
a priori may depend on this  system coordinates.  We show that it is in fact independent of such a choice.  
For this we first remark that the \multseq  is constant in \unt Zariski equisingular families.  

\begin{prop}\label{prop:families}
Given an analytic family of surface singularities $V_t=f_t\inv (0)$ in $\C^3$  given by
\eqref{eq:newfwithparameter}.   
If this family is \unt Zariski equisingular then the \multseq $\snu^* (V_t,0)$ 
(with respect to the fixed system of coordinates $x,y,z$)  is independent of $t$.  
\end{prop}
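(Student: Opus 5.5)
We have to show that each of the four entries of $\snu^* (V_t) = (\mult_0 (V_t), \mult_0 (D_{f_t}), i_0(t), \mult_0 (D^{i_0(t)}_{D_{f_t}}))$ is independent of $t$. Two preliminary points are needed.

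First, for small $t$ the fixed coordinates $x,y,z$ must be \unt for $V_t$, and not only for $V_0$; otherwise the multiplicity sequence of $V_t$ is not given by these formulas. Conditions (1) and (2) of Definition \ref{defn:newsnt-generic} are open conditions once we know equimultiplicity (Remark \ref{rk:newtransversality and equimultiplicity}). Condition (3) holds because the family $\Delta_{b,t}$ is equisingular in $(b,t)$ jointly, so for each fixed small $t$ its restriction to the $b$-direction is equisingular.

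Second, we must compare discriminants taken fibrewise with discriminants of the whole family. Since $f$ is a Weierstrass polynomial in $z$ with coefficients analytic in $(x,y,t)$, the discriminant commutes with specialisation: $D_{f}(x,y,t)|_{t}=D_{f_t}(x,y)$. The same holds for the generalized discriminants, which are polynomial expressions in the coefficients (see the Appendix).

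The steps are as follows.

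\textbf{Step 1.} $\mult_0 V_t$ is constant. By Remark \ref{rk:newtransversality and equimultiplicity}, Zariski equisingularity implies equimultiplicity. Concretely, the $z$-axis is transverse to $V_t$, so $\mult_0 V_t = d$, the degree of the Weierstrass polynomial. This requires transversality of $V_t$ for $t\ne 0$, which again follows from equimultiplicity and normal pseudo-flatness as in that remark.

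\textbf{Step 2.} $\mult_0 D_{f_t}$ and $i_0(t)$ are constant. By Definition \ref{defn:newsnt-genericZE}, the plane curves $\Delta_{0,t}$ form an equisingular family with the transverse projection $(x,y)\to x$. Equisingularity of plane curves implies equimultiplicity of the reduced curves $(D_{f_t})_{red}$. We need more than this, namely constancy of the multiplicity of the non-reduced $D_{f_t}$, and this is the \emph{main obstacle}.

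My plan is to factor $D_f = \prod_j g_j^{m_j}$ into irreducible factors in $\C\{x,y,t\}$. Equisingularity, via Zariski's theory (or, more directly, the equisingularity of the first non-zero generalized discriminant in Definition \ref{def:newZAinfamilies}(2)), implies that the zero set of each $g_j$ is a family of curves whose fibres are unions of branches of $\Delta_{0,t}$. Each $g_j$ is then equimultiple along $\{0\}\times \C^l$, and $g_{j,t}$ is reduced for generic $t$. This gives
\[
\mult_0 D_{f_t} = \sum_j m_j \mult_0 g_{j,t},
\]
which is constant.

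For $i_0(t)=\idiscr(D_{f_t})$, I would use the Appendix description: $\idiscr$ is determined by the number of distinct roots of $D_{f_t}$ as a polynomial in $y$. The fibrewise number of distinct roots is the degree of $(D_{f_t})_{red}$, which is constant because the reduced discriminant curves are equisingular and equimultiple with transverse $y$-direction.

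\textbf{Step 3.} $\mult_0 D^{i_0}_{D_{f_t}}$ is constant. Apply Definition \ref{def:newZAinfamilies}(2) to the family $D_f(x,y,t)$, which is regular in $y$. It states exactly that $\mult_{(0,t)} D^{i_0}_{D_f}$ is independent of $t$, where $i_0=\idiscr(D_f)$ is computed for the family. By the second preliminary point, $D^{i_0}_{D_f}$ specialises to $D^{i_0}_{D_{f_t}}$. The index $i_0$ agrees with $i_0(t)$ because, by Step 2, the number of distinct roots does not drop at $t$.

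It remains to pass from multiplicity along $\{0\}\times\C^l$ to the multiplicity of the restriction at $t$. This equality holds because $D^{i_0}_{D_f}$ is a one-variable series in $x$ times a unit, and its $x$-order equals both quantities once equimultiple. For the degenerate case $D^{j_0}_f(0)\ne0$, all the quantities are trivially $0$.
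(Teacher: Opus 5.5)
Your proposal is correct and takes the same route as the paper, whose entire proof is the observation that multiplicity is constant in Zariski equisingular families (Remark \ref{rk:newtransversality and equimultiplicity}). You spell out what that one line leaves implicit: that $x,y,z$ remains $\nu$-transverse for each $V_t$, that (generalized) discriminants commute with specialisation in $t$, and that the multiplicity of the non-reduced $D_{f_t}$ and the index $i_0$ stay constant, not only the multiplicity of the reduced curve $\Delta_{0,t}$.
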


\begin{proof}
It follows from the fact that the multiplicity is constant in Zariski equisingular families, see Remark \ref{rk:newtransversality and equimultiplicity}. 
\end{proof}


\begin{prop}\label{prop:multindep}
The \multseq of $V$ is independent of the choice of local system of coordinates.    
\end{prop}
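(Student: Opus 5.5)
Let $x,y,z$ and $x',y',z'$ be two $\nu$-transverse systems of local coordinates for $V=f^{-1}(0)$. We connect them by a family of coordinate changes along which the multiplicity sequence stays constant, and then apply Proposition \ref{prop:families}. The natural tool is Theorem \ref{thm:newgenericwithparameter} (or its special case Theorem \ref{thm:newgeneric}). It says that, starting from a $\nu$-transverse system, any analytic family of coordinate changes $\impli_\tau$ with $\impli_\tau(0)=0$ and $D\impli_0(0)=\id$ yields a Zariski equisingular family $V(f\circ\impli_\tau)$ with respect to $\tau$.

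First I reduce to the case where both systems share the same linear part, up to the allowed genericity. By Lemma \ref{lem:newgenericlinear==>nu}, the set of linear changes $A$ for which $A\cdot(x,y,z)$ is $\nu$-transverse contains a Zariski open dense subset $U$ of $GL_3(\C)$. I would check that $\nu$-transversality depends only on the linear part of the coordinate system at $0$: condition (1) clearly does, and conditions (2)--(3) should follow from Theorem \ref{thm:newgeneric} applied to higher order perturbations. Granting this, the set of admissible linear parts is Zariski open dense, hence connected. Any two admissible linear parts are then joined by a path inside this open set, which can be covered by finitely many small balls.

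On each small ball I apply Theorem \ref{thm:newgenericwithparameter} with $\tau$ ranging over a neighbourhood of a base point $A_0$. I write the nearby systems as $\impli_\tau=A_0^{-1}A_\tau\circ(\text{higher order terms})$ with $D\impli_0(0)=\id$, and allow interpolation $s\,h$, $s\in\C$, between the nonlinear parts of the two systems. The family $V(f\circ\impli_\tau)$ is then Zariski equisingular in $\tau$. To invoke Proposition \ref{prop:families} I need $\nu$-transverse Zariski equisingularity, not merely Zariski equisingularity. I get this by enlarging the parameter by $b$, i.e.\ composing with $(X,Y+bZ,Z)$, which is again of the form covered by Theorem \ref{thm:newgenericwithparameter}. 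Proposition \ref{prop:families} then shows that $\snu^*$ is locally constant in $\tau$, and connectedness gives $\snu^*$ for $x,y,z$ equal to $\snu^*$ for $x',y',z'$.

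The main obstacle is the bookkeeping of the last step. Zariski equisingularity of the family of discriminants (the parameters being $\tau$ and $b$) must give constancy not only of $\mult_0 V$ and $\mult_0 D_f$ but also of $i_0=\idiscr(D_f)$ and $\mult_0 D^{i_0}_{D_f}$. I would handle this via Remark \ref{rk:newtransversality and equimultiplicity}. Equimultiplicity of the discriminant curves, with the second projection kept transverse uniformly, forces the first nonvanishing generalized discriminant of $D_f$ to be equimultiple. Its index $i_0$ is constant in the family because it is determined by the number of distinct roots generically. A second delicate point is justifying that the set of $\nu$-transverse systems is connected through paths where Theorem \ref{thm:newgenericwithparameter} applies; if that fails, I would instead compare each system directly to a generic linear one, using the Zariski density from Lemma \ref{lem:newgenericlinear==>nu}.
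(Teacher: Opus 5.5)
Your proposal is correct and is essentially the paper's proof. The paper also appends the parameter $b$ to a family of coordinate changes with derivative $\id$ at $0$ and invokes Theorem \ref{thm:newgeneric} (its Lemma \ref{lem:nutransverse1}, applied along the straight line $(1-t)\id+t\impli$) so that Proposition \ref{prop:families} applies, and it then treats linear changes by local constancy plus genericity; the only difference is that you merge the nonlinear and linear steps into a single path, and the constancy of $i_0$ and $\mult_0 D^{i_0}_{D_f}$ that you list as the main obstacle is already the content of Proposition \ref{prop:families}.

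The one slip is calling the set $W$ of admissible linear parts Zariski open, since you only know it contains the Zariski open dense $U$. However, $W$ is Euclidean open by the same local argument, and $U\subset W\subset\overline U$ with $U$ connected, so $W$ is still (path-)connected; this is essentially your fallback.
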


\begin{proof}
We use Theorem \ref{thm:newgeneric} and Proposition \ref{prop:families}. 

Let $(x,y,z) = \impli (X,Y,Z)$ be an analytic change of local coordinates $(x,y,z) = \impli (X,Y,Z)$.  We put it in a family 
\begin{align}\label{eq:systemdeformation} 
(x,y,z)= \Phi (X,Y,Z,t)= (1-t)(X,Y,Z) + t \impli (X,Y,Z), \quad t\in \C,     
\end{align}
that defines the family   
$F(X,Y,Z,t)=F_t(X,Y,Z)=  f(\Phi (X,Y,Z,t))$ and   $V_t=F_t\inv (0)$.   Note that for this family the system of local coordinates $X,Y,Z$ is fixed and the hypersurface $V_t=F_t\inv (0)$ is changing. 
If this family is \unt equisingular (at least for all $t$ on a continuous curve in $\C$ joining $0$ and $1$) then, by Proposition \ref{prop:families},  \multseq of $V$ is the same in the system given by $t=0$ and the one given by $t=1$.   

Firstly, we remark that the \multseq is preserved by a small linear change of coordinates.  Indeed, it follows from Theorem \ref{thm:newgenlin} that shows in particular that a small linear change of a \unt system of coordinates is \snt.  Then the statement for an arbitrary linear change of coordinates between two \unt system of coordinates follows from the fact that the multiplicity is generically constant in analytic families of singularities.

Secondly, we consider an arbitrary analytic change of local coordinates 
$(x,y,z) = \impli (X,Y,Z)$ with an extra condition $D \impli (0) = \id $.  

\begin{lemma}\label{lem:nutransverse1}
  If we suppose in \eqref{eq:systemdeformation}  that $D \impli (0) = \id $ and that the system of coordinates for $t=0$ is \unt 
  then for any $t_0 \in \C $ the system $X,Y,Z$ is \unt for $V_{t_0}$ 
 and the family $V_{t}$ is \unt Zariski equisingular at $t_0$. 
\end{lemma}

\begin{proof}
  This is a consequence of Theorem \ref{thm:newgeneric}. Indeed, apply this theorem to   $\varphi (X,Y,Z,T, b) = \Phi ( X, Y+bZ,Z,T-t_0 )  $, $T$ and $b$ being parameters.  
 \end{proof}


Finally, for an arbitrary analytic change of local coordinates,
 we first use Lemma \ref{lem:nutransverse1} to join the given system of local coordinates to its linear part, then use the result for a linear change of coordinates.   
\end{proof}


\section{\multseq for isolated singularities}
\label{sec:isolsing}



For isolated singularities the \multseq can be computed in terms of $\mu^*(V_t)$, $k(V_t)$, and $\varphi (V_t)$ and, as we explain below, the constancy of $\mu^*(V_t)$, $k(V_t)$, and $\varphi (V_t)$ is equivalent to the constancy of $\snu^* (V_t, 0)$.  Thus Théorème 3.5.3 of \cite{brianconspederthese} and Théorème 4.1 
of \cite{brianconhenry80} are consequences of Theorem \ref{thm:mainequivalence2}.  

For an isolated surface singularity $(V,0)\subset (\C^3,0)$
the following formulas are given in \cite{brianconspederthese}, see also \cite[Proposition 1.1]{brianconhenry80}, the first one also follows from  \cite[Lemme 5.10]{teissierasterisque73}, 
\begin{enumerate}[label=(\alph*)]
\item 
the multiplicity of the discriminant locus $\Delta_{f}= D_{f}\inv(0)$ is given by 
\begin{align}\label{eq:formula1}
\mult_0 (\Delta_{f}) = \mu^2(V) + \mu^1(V) .
\end{align}
\item 
the Milnor number of $\Delta_{f}$ is given by
\begin{align}\label{eq:formula2}
\mu (\Delta_{f}) = \mu^3(V) - \mu^1(V) + 2 k(V) +3\varphi (V) +1 .
\end{align}
\end{enumerate}

Note that for a generic linear projection $D_{f}$ is reduced (by the isolated singularity assumption) and hence $\idiscr (D_{f}) =1 $.  
Then, as follows from  \cite[Lemme 5.10]{teissierasterisque73},  
\begin{align}\label{eq:formula3}
\mult_0 (D_{D_{f}}) = \mu (D_{f}) + \mult_0 (D_{f}) -1 .
\end{align}

We have the following result.

\begin{prop}\label{prop:multseqforisolated}
For an isolated surface singularity $(V,0)\subset (\C^3,0)$ the \multseq equals 
\begin{align}\label{eq:multseqforisolated}
\snu^* (V)  = (\mu^1 (V) + \mu^0 (V), \mu^2 (V) + \mu^1 (V), 1, \mu^3 (V) + \mu^2 (V) + 2 k(V) +3\varphi (V) ).
\end{align}
In a family of isolated surface singularities $(V_t,0)\subset (\C^3,0)$ the constancy of the 
\multseq is equivalent to the constancy of $\mu^*(V_t)$, $k(V_t)$, and $\varphi (V_t)$.  
\end{prop}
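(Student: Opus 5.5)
Work in a \unt system of coordinates, which exists after a generic linear change by Lemma \ref{lem:newgenericlinear==>nu}; by Proposition \ref{prop:multindep} the \multseq can be computed there. I will treat the four entries of $\snu^*(V)$ one at a time and then deal with families.

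For the first entry, transversality of the $z$-axis gives $\mult_0(V)=d$. Teissier's numbers satisfy $\mu^1(V)=\mult_0(V)-1$ and $\mu^0(V)=1$, so $\mult_0(V)=\mu^1(V)+\mu^0(V)$.

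For the second entry, I use formula \eqref{eq:formula1}, which gives $\mult_0(D_f)=\mult_0(\Delta_f)=\mu^2(V)+\mu^1(V)$. This requires $D_f$ to be reduced, which holds for a generic projection because the singularity is isolated; this also gives the third entry, $i_0=\idiscr(D_f)=1$. The one point to check is that a \unt system is generic enough for \eqref{eq:formula1} and for reducedness. Since the \multseq does not depend on the \unt system, I may take it inside the Zariski open set of linear systems where the Briançon--Speder formulas hold. That set meets the open dense set of \unt systems.

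For the fourth entry, combine \eqref{eq:formula3}, \eqref{eq:formula2} and \eqref{eq:formula1}:
\[
\mult_0(D_{D_f})=\mu(\Delta_f)+\mult_0(D_f)-1
=\bigl(\mu^3-\mu^1+2k+3\varphi+1\bigr)+\bigl(\mu^2+\mu^1\bigr)-1 ,
\]
which simplifies to $\mu^3+\mu^2+2k+3\varphi$. This proves \eqref{eq:multseqforisolated}.

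For the equivalence in families, one direction is immediate: if $\mu^*(V_t)$, $k(V_t)$ and $\varphi(V_t)$ are constant, then the formula shows the \multseq is constant. For the converse, constancy of the first entry gives $\mu^1$ constant, and then the second entry gives $\mu^2$ constant. The fourth entry only gives that $\mu^3+2k+3\varphi$ is constant, so the individual terms must be separated. This separation is the main obstacle.

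To separate them I use upper semicontinuity. By \cite{teissierasterisque73}, $\mu^3$ is upper semicontinuous in families, and the quantities $k$ and $\varphi$ are also upper semicontinuous, being counted by the algebraic formulas of \cite{brianconhenry80}. A sum of upper semicontinuous integer-valued functions that is constant has each summand constant generically, but this alone is not enough to conclude constancy at every $t$.

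The cleaner route is through the discriminant curve. Constancy of $\mult_0(D_f)$ and of $\mult_0(D_{D_f})$ along $t$ means the curve family $\Delta_{f_t}$ (with $D_{f_t}$ reduced) is Zariski equisingular for the projection $(x,y)\to x$. Hence it is topologically equisingular, so $\mu(\Delta_{f_t})$ is constant. Then the family $V_t$ is Zariski equisingular for the generic projection, and \cite[Théorème 3.5.3]{brianconspederthese} gives constancy of $\mu^*(V_t)$, $k(V_t)$ and $\varphi(V_t)$ separately. If that citation is to be avoided, I would instead argue via Whitney conditions: Speder's theorem gives Whitney conditions, the Briançon--Speder converse gives $\mu^*$ constant, and then \eqref{eq:formula2} together with the constancy of $\mu(\Delta_{f_t})$ leaves only $2k+3\varphi$ constant. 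The final separation of $k$ and $\varphi$ would rest on their separate upper semicontinuity together with their geometric interpretation as counts of nodes and cusps of the generic polar discriminant.
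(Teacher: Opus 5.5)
Your computation of the four entries matches the paper's. The first entry comes from $\mu^1=\mult_0(V)-1$ and $\mu^0=1$; the others come from (a), (b), (c) and reducedness of $D_f$. Your remark that the $\nu$-transverse system may be chosen inside the open set where the Brian\c{c}on--Speder formulas hold makes explicit a point the paper leaves implicit. The forward implication in families is also the same.

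For the converse, your reason for abandoning semicontinuity is wrong. Upper semicontinuity at $t=0$ gives $\mu^3(V_t)\le\mu^3(V_0)$, $k(V_t)\le k(V_0)$ and $\varphi(V_t)\le\varphi(V_0)$ for \emph{every} $t$ in a neighbourhood of $0$. Since $\mu^3+2k+3\varphi$ is constant there and the coefficients are positive, all three inequalities are equalities at each such $t$. That is exactly constancy of the germ of the family. This is the paper's whole argument for the converse, with $\mu^1,\mu^2$ read off from the first two entries as you do. It is also the step you fall back on at the end of your Whitney variant to separate $k$ and $\varphi$, so the proposal contradicts itself here.

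Your ``cleaner route'' does work, but it is a detour: it is the implication (iii)$\Rightarrow$(ii) of Theorem \ref{thm:mainequivalence2} followed by Th\'eor\`eme 3.5.3 of Brian\c{c}on--Speder. As written it also glosses over a step. $\snu^*(V_t)$ is computed in a $\nu$-transverse system chosen for each $V_t$ separately. Getting constancy of $\mult_0 D_{f_t}$ and $\mult_0 D_{D_{f_t}}$ in one fixed system needs the argument from the proof of that theorem: semicontinuity in $t$ gives one inequality, and minimality of the generic value over linear projections gives the other. It is simpler to cite the theorem directly.

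The substantive trade-off is that you import the Brian\c{c}on--Speder characterization of Zariski equisingularity. The paper's proof uses only the numerical formulas and the semicontinuity of $\mu^3$, $k$, $\varphi$. That independence lets the paper present Th\'eor\`eme 3.5.3 of Brian\c{c}on--Speder (and Th\'eor\`eme 4.1 of Brian\c{c}on--Henry) as consequences of Theorem \ref{thm:mainequivalence2} and this proposition. With your proof, recovering the former in this way would be circular.
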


\begin{proof}
The formula \eqref{eq:multseqforisolated} follows from the formulas \eqref{eq:formula1}, \eqref{eq:formula2}, \eqref{eq:formula3}.  Therefore the con{}stancy of $\mu^*(V_t)$, $k(V_t)$, and $\varphi (V_t)$ implies the constancy of the multiplicity sequence.  For the converse we note that all these invariants are upper semicontinuous in families.  For Teissier's numbers it follows from \cite{teissierasterisque73}, for $k(V_t)$ and $\varphi (V_t)$ it is shown in \cite{brianconhenry80}, and for the \multseq it is given  by Proposition \ref{prop:multsemicont}.  
\end{proof}

\section{Proof of Main Theorem}
\label{sec:proofofmaintheorem}

\begin{proof}[Proof of Theorem \ref{thm:mainequivalence2}]
The condition (i) clearly implies (ii) and the converse follows from Lemma \ref{newlem:(4)==>(1)}.  

By Proposition \ref{prop:families}, (i) implies (iii).  We will show the converse. Thus suppose (iii).  After a generic linear change of the system of coordinates we may suppose that the local system of coordinates $x,y,z$ is \unt for $t=0$.  By the assumption (iii),  $\mult_0 {f_t}$ is independent of $t$ and hence, see Remark \ref{rk:newtransversality and equimultiplicity},  the kernel of the projection $\pi_b$ is transverse to $V_t$ for $t$ and $b$ sufficiently small. 

We will first show that the \multseq $\snu^* (V_{t,b})$ of $V_t$ with respect to the system of coordinates $(x, y-bz, z)$ is constant for $b$ and $t$ sufficiently small.  

Let $D_f (x,y,t,b)$ denote the discriminant of $f_t$ with respect to the projection $\pi_b$.   Now we argue for $(D_f)_{t,b}$ as before for $f_t$.  By the semicontinuity of multiplicity, and the fact that the multiplicity $\mult_0 (D_f)_{t,b}$ at $b=t=0$ equals to the multiplicity at $t,b$ small and generic, this multiplicity is independent of $t,b$  and the projection $(x,y)\to x$ is transverse for the zero sets of $(D_f)_{t,b}$ provided $t$ and $b$ are sufficiently small.  

The set of parameters $t,b$ for which $D^i_{D_f} \equiv 0$ as a function of  $x,y$ is closed for all $i$. Therefore, again by assumption (iii),  $D^i_{D_f} \equiv 0$ and $D^{i_0}_{D_f}\not \equiv 0$ for  $i<i_0=\idiscr ({D_{f_0}}) $ and all $t$ and $b$ sufficiently small.    We conclude this part of the proof again by the semicontinuity of the multiplicity, this time  of $D^{i_0}_{D_f}$.    

Now we show that the family of discriminant loci $\Delta_{t,b}$ of the projection $\pi_b$ of $V_t$ is an equisingular family of plane curves. Since the projection $(x,y) \to x$ is transverse to $\Delta_{0,0}$ it suffices to show that the family that the discriminants of projection onto $x$-axis of $\Delta_{t,b}$ is equisingular, that is equimultiple as a family of hypersurfaces in 
$\C$.  This follows form the constancy of $\snu^* (V_{t,b})$.    
\end{proof}

The proof of the semicontinuity  of the \multseq with respect to the lexicographic order is similar.

\begin{prop}\label{prop:multsemicont}
The \multseq is upper semicontinuous in families.    
\end{prop}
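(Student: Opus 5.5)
Fix a family $V_t=f_t\inv(0)$ as in \eqref{eq:famwithparameter}. The claim to prove is that, for all small $t$,
$$\snu^*(V_t)\le \snu^*(V_0)$$
in the lexicographic order. The plan is to follow the proof of (iii)$\Rightarrow$(i) in Theorem \ref{thm:mainequivalence2}, working with a single system of coordinates for the whole family and comparing the four entries one at a time.

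\emph{Choice of coordinates.} By Lemma \ref{lem:newgenericlinear==>nu}, after a generic linear change of coordinates the system $x,y,z$ is \unt for $V_0$. Transversality in Definition \ref{defn:newsnt-generic} is an open condition, and equisingularity of the family $\Delta_b$ is generic in the parameters, so a generic linear system is \unt for $V_t$ as well, for each fixed small $t$. By Proposition \ref{prop:multindep} the multiplicity sequence does not depend on the coordinates used, so we may compute $\snu^*(V_t)$ in any \unt system for $V_t$. The concrete plan is to compare in the fixed system $x,y,z$, and to pass to the generic $(t,b)$-projection $\pi_b$ only where transversality fails.

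\emph{First entry.} Multiplicity is upper semicontinuous, so $\mult_0(V_t)\le \mult_0(V_0)$. If the inequality is strict we are done. Otherwise $\mult_0(V_t)=d$ for all small $t$, so the $z$-axis stays transverse to $V_t$ and $f_t$ is a Weierstrass polynomial of constant degree $d$. Then $D_f(x,y,t)$ is an analytic family of plane curves, and semicontinuity gives $\mult_0(D_{f_t})\le \mult_0(D_{f_0})$.

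\emph{Second entry.} There is a subtlety here: $D_{f_t}$ computed in the fixed system is the discriminant of $V_t$ for a projection that might not be \unt for $V_t$. To handle this, view $\mult_0(D_f)$ as a function of $(t,b)$. It is upper semicontinuous, and for generic $(t,b)$ the value equals the invariant of $V_t$ computed in a \unt system. Hence
$$\snu^*(V_t)_2\le \mult_0\bigl((D_f)_{t,0}\bigr)\le \mult_0\bigl((D_f)_{0,0}\bigr).$$
Again, if the inequality is strict we are done. If equality holds, then the projection $(x,y)\to x$ remains transverse to $\Delta_{t,b}$ (equal multiplicity keeps the tangent cone from jumping to contain the kernel direction) and the degree in $y$ is constant.

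\emph{Third entry.} The set of $(t,b)$ with $D^i_{D_f}\equiv 0$ is analytic and closed. So if $D^i_{D_{f_0}}\not\equiv0$, the same holds nearby, which gives $\idiscr(D_{f_t})\le \idiscr(D_{f_0})$. Note the direction is right: a smaller $i_0$ is a smaller entry. If the third entry is also equal, then $D^{i_0}_{D_f}\not\equiv0$ for all small $(t,b)$.

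\emph{Fourth entry.} Semicontinuity of the multiplicity of $D^{i_0}_{D_f}$ in $(t,b)$ then closes the argument.

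\emph{Main obstacle.} The delicate point throughout is that semicontinuity in the fixed system must be transferred to the invariant of $V_t$, which is only defined in a \unt system. The tool I would use is the following. Because the strata where the first $k$ entries are constant are analytic, generic $(t,b)$ in a small neighbourhood give \unt systems for $V_t$. So for fixed $t$ the value at generic $b$ equals $\snu^*(V_t)$ by Proposition \ref{prop:multindep}. Combining this with semicontinuity of each entry in $(t,b)$ at the origin, the lexicographic comparison follows entry by entry, each step assuming equality in the previous entries as above.
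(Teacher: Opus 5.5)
Your proposal is essentially the paper's argument: compare the four entries lexicographically, at each stage restricting to the parameters where the preceding entries agree with those of $V_0$, and use upper semicontinuity in $(t,b)$ of $\mult_0 D_f$ and of $\mult_0 D^{i_0}_{D_f}$ together with the closedness of $\{D^i_{D_f}\equiv 0\}$. The paper leaves implicit why the fixed coordinates compute $\snu^*(V_t)$, and your stated reason does not follow: you argue that analyticity of the strata makes $(x,y-bz,z)$ \unt for $V_t$ for generic $b$, but $\pi_b$ only moves the kernel inside the fixed plane $\{x=0\}$. What you actually need is only the inequality $\snu^*(V_t)_k\le$ the $k$-th entry at $(t,0)$ in each equality case. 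This holds because, by Lemma \ref{lem:newgenericlinear==>nu} and Proposition \ref{prop:multindep}, each entry computed in a \unt system is the generic value of that upper semicontinuous quantity. It is therefore the minimal value over the projections at which the preceding entries already take their \unt values.
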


\begin{proof}
We consider the family $V_t= f_t\inv (0)$ given by  \eqref{eq:newfwithparameter} and  show that $\snu^* (V_t, 0) \le \snu^* (V_0, 0)$ for $t$ close to $0$ and that 
$\{t;  \snu^* (V_t, 0) = \snu^* (V_0, 0)\}$ is an analytic germ.  
We suppose that the local system of coordinates $x,y,z$ is \unt for $t=0$.  

The multiplicity $\mult_0 V_t$ is clearly semicontinuous. Thus we may restrict the parameter space to any irreducible component of the set $\{t; \mult_0 V_t = \mult_0 V_0 \}$.  Denote such a component by $T$.  (Using resolution of singularities we may assume that $T$ is nonsingular but this is not necessary.)  Next we consider $D_{f_t}(x,y,b)$ for $t\in T$ and $b$ close to the origin.  Again we may restrict to an irreducible component of $\{t\in T; \mult_0 D_{f_t} = \mult_0 D_{f_0} \}$.  We keep the notation $T$ for such a component.   
The set of parameters $t,b$ for which $D^i_{D_f} \equiv 0$ is closed for all $i$.  Thus again, by restricting to an irreducible component of an analytic subset ot $T$, we may suppose that $D^{i_0}_{D_{f_t}} \not \equiv 0$ for all $t\in T$, where $i_0=\idiscr (D_{f_0}) $.  
Finally the $\{t\in T; \mult_0 D^{i_0}_{D_{f_t}} = \mult_0 D^{i_0}_{D_{f_0}} \}$ is closed, that completes the proof.  
\end{proof}




 \section*{Appendix. Generalized discriminants}

We recall the notion of generalized (or subdiscriminants) discriminants, see Appendix IV of \cite{whitneybook}, \cite{BPRbook}, \cite{roy2006} or \cite{MR4367438}. 
Let $\kk$ be an arbitrary field of characteristic zero and let 
\begin{align}\label{type}
F(Z) = Z^d + \sum_{j=1}^ d a_iZ^{d-i} = \prod_{j=1}^d (Z-\xi_i)\in \kk [Z], 
\end{align}
be a polynomial with coefficients $a_i\in \kk$.  Let $\xi_1$,\ldots, $\xi_d$ be the roots of $F$ in the algebraic closure $\overline \kk$ of $\kk$.  Then \emph{the generalized discriminants (or subdiscriminants)} 
can be defined in terms of the roots 
$$
D_{d-j+1} =  \sum_{r_1 <\cdots < r_{j}} \,  \prod_{k< l;\, k,l \in \{r_1, \ldots ,r_{j}\}}  (\xi_k-\xi_l)^2. 
$$
Since the $D_{d-j+1}$ are symmetric polynomials in the $\xi_i$ they are polynomials in the coefficients $a_i$ and one can even show that they are polynomials with integer coefficients, that is belong to $\Z[a_1, \ldots, a_d]$.  
In particular $D_1$ is the classical discriminant of $F$, and $F$ admits exactly $k$ distinct roots in $\overline \kk$ if and only if $D_{1} = \cdots = D_{d-k}=0$ and 
$D_{d-k+1} \ne 0$.

If $F$ is not reduced, that is if it is has multiple roots, then the generalized discriminants can replace the (classical) discriminant of $F_{red}$.   

\begin{lemma}\label{lem:twodiscr} 
Suppose $F$ has exactly $s$ distinct roots in 
$\overline {\kk}$ of multiplicities $\mathbf m=(m_1, ..., m_s)$.  Then there is a positive constant $C= C_{s,\mathbf m}$, depending only on 
$\mathbf m=(m_1, ..., m_s)$, such that  the generalized discriminant $D_{s}$ of $F$ and 
the standard discriminant $D_{F_{red}}$ of $F_{red}$ are related by the formula $D_{d-s+1,F}= C D_{F_{red}}$. 
\end{lemma}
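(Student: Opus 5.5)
The plan is to compute $D_{d-s+1,F}$ directly from its root formula, since both sides are explicit symmetric functions of the roots. Let $\eta_1,\ldots,\eta_s$ be the distinct roots of $F$ in $\overline{\kk}$, with multiplicities $m_1,\ldots,m_s$, so that $\sum_p m_p=d$. List the roots with multiplicity as $\xi_1,\ldots,\xi_d$. Partition the index set $\{1,\ldots,d\}$ into clusters $I_1,\ldots,I_s$ with $|I_p|=m_p$, where $\xi_k=\eta_p$ for $k\in I_p$.

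Taking $j=s$ in the definition gives $D_{d-s+1}$, a sum over subsets $\{r_1<\cdots<r_s\}$ of size $s$ of $\prod (\xi_k-\xi_l)^2$. First I will show that a term vanishes as soon as two indices lie in the same cluster, because the corresponding factor $(\xi_k-\xi_l)^2$ is then zero. Since there are exactly $s$ indices and $s$ clusters, the only possibly nonzero terms come from subsets that pick exactly one index from each cluster $I_p$. There are exactly $\prod_{p=1}^s m_p$ such subsets.

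For each such subset, the selected roots are precisely $\eta_1,\ldots,\eta_s$ in some order. The product over pairs is symmetric under reordering, because each factor is squared, so it equals $\prod_{p<q}(\eta_p-\eta_q)^2$. Now $F_{red}=\prod_p (Z-\eta_p)$ is monic with simple roots $\eta_p$, so this product is exactly the standard discriminant $D_{F_{red}}$. Summing over the admissible subsets gives
\[
D_{d-s+1,F}=\Bigl(\prod_{p=1}^s m_p\Bigr) D_{F_{red}} .
\]
Hence $C_{s,\mathbf m}=\prod_p m_p$, which is a positive integer depending only on $\mathbf m$.

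The only delicate point is the normalization of the standard discriminant. If one uses the convention $\mathrm{Disc}=(-1)^{s(s-1)/2}\prod_{p<q}(\eta_p-\eta_q)^2$, the constant picks up the sign $(-1)^{s(s-1)/2}$ and is no longer positive. I will therefore fix the product-of-squared-differences convention, which matches the definition of $D_{d-j+1}$ above. Under that convention the constant is positive as stated, and in either case it is nonzero, which is all that is used when $D^{i_0}$ replaces $D_{(\cdot)_{red}}$ in the body of the paper.
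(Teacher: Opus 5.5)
Your proof is correct and complete. The paper gives no proof of this lemma to compare against: it is stated in the appendix with references to the literature on subdiscriminants. Your direct computation from the root formula is the standard argument. The pigeonhole step (nonzero terms must take exactly one index from each cluster) and the count of admissible subsets are right, and the argument has the advantage of giving the explicit constant $C_{s,\mathbf m}=\prod_{p=1}^s m_p$. This is a positive integer, hence nonzero in characteristic zero, which is what the body of the paper needs when it replaces $D_{(\cdot)_{red}}$ by the first nonvanishing generalized discriminant.

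Some small remarks:
\begin{itemize}
\item The sign ambiguity you worry about does not arise inside the paper. Taking $j=d$ in the defining formula gives $D_1=\prod_{k<l}(\xi_k-\xi_l)^2$, which fixes the paper's normalization of the classical discriminant as exactly the one you use for $D_{F_{red}}$. The factor $(-1)^{s(s-1)/2}$ appears only if one defines the discriminant as $\mathrm{Res}(F,F')$.
\item You correctly read the relevant index as $d-s+1$ (that is, $j=s$). The ``$D_s$'' in the statement is a slip for $D_{d-s+1}$.
\item The degenerate case $s=1$ is covered by the empty-product convention: $D_{d,F}=d=m_1\cdot 1$.
\end{itemize}
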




\bibliographystyle{siam}
\bibliography{ZE}

\end{document}